
\documentclass[11pt]{amsart}
\usepackage{amssymb}
\usepackage{graphicx}
\usepackage{url}
\usepackage{enumerate}
\usepackage[latin1]{inputenc}
\usepackage{color}
\usepackage{leftidx}
\usepackage{tikz}
\usepackage{tikz-cd}

\newcommand\restr[2]{{
  \left.\kern-\nulldelimiterspace 
  #1 
  \vphantom{\big|} 
  \right|_{#2} 
  }}


\newtheorem{theorem}{Theorem}

\newtheorem{lemma}[theorem]{Lemma}
\newtheorem{corollary}[theorem]{Corollary}

\theoremstyle{definition}
\newtheorem{definition}[theorem]{Definition}
\newtheorem{remark}[theorem]{Remark}
\newtheorem{example}[theorem]{Example}
\newtheorem{claim}[theorem]{Claim}


%
%

\setlength{\marginparwidth}{0.7in}


\begin{document}

\title[Construct $b$-symplectic toric manifolds from toric manifolds]{Construct $b$-symplectic toric manifolds from toric manifolds}

\author{Mingyang Li}\address{School of Mathematics, University of Science and Technology of China, Hefei, Anhui, China.}\email{lmy0312@mail.ustc.edu.cn}
\date{\today}

\begin{abstract}
In \cite{btoric}, Guillemin et al. proved a Delzant-type theorem
which classifies $b$-symplectic toric manifolds. More generally, in
\cite{torus} they proved a similar convexity result for general
Hamiltonian torus action on $b$-symplectic manifolds. In this paper
we provide a new way to construct $b$-symplectic toric manifolds
from usual toric manifolds. Conversely, through this way we can also
decompose a $b$-symplectic toric manifolds to usual toric manifolds.
Finally we will try to prove that this kind of decomposition is
useful.
\end{abstract}

\maketitle

\tableofcontents

\section{Introduction}

It is a very famous and beautiful result that the image of the
moment map of a compact symplectic toric manifold is a rational
convex polytope, which we call Delzant polytope. Moreover, the image
of the moment map classifies compact symplectic toric manifolds up
to equivariant diffeomorphisms. See \cite{guilleminsternberg},
\cite{guilleminsternberg2} and \cite{delzant}. More recently, in
\cite{btoric} Guillemin et al. generalize this result to
$b$-symplectic toric manifold, which is a symplectic manifold except
a hypersurface and admits a Hamiltonian toric action with respect to
this singular symplectic form. In this generalized $b$-symplectic
toric case, because of the fact that the symplectic form is singular
along some hypersufaces, the definition of moment map is also
adjusted a little, and the image of the new moment map will be so
called $b$-Delzant polytope. In \cite{btoric}, they constructed
corresponding $b$-symplectic toric manifold from $b$-Delzant
polytope by using symplectic cutting.

In this paper, we will give a way to construct the $b$-symplectic
toric manifold by cutting and gluing several usual symplectic toric
manifolds which belong to some special class of symplectic toric
manifolds. The gluing process will be done along two hypersurfaces,
and such a construction will tell us more topology information about
the $b$-symplectic manifolds. Conversely, using this kind of idea we
can also decompose every $b$-symplectic manifolds to some special
class of usual toric manifolds, which are easily characterized
combinatorially.

This paper is organized as follows: In Section \ref{section prelim}
we will go over some basic definitions and properties of
$b$-symplectic manifolds, in Section \ref{section btoric} we will
review the $b$-Delzant type theorem proved in \cite{btoric}. In
Sections \ref{section construction} and \ref{section decomposition}
we will construct $b$-symplectic toric manifolds from some special
class of toric manifolds, and converse the construction to decompose
$b$-symplectic toric manifolds. And finally in Section \ref{section
application}, we will try to use this kind of construction to prove
some properties of the $b$-symplectic toric manifolds.
\section{Preliminaries}\label{section prelim}

In this part, we will go over some basic definitions and properties
of $b$-symplectic manifolds. We will also give some basic examples.
More details can be found in \cite{scott} and \cite{guimipi12}. As
for basic Poisson geometry, the reader can refer to \cite{poisson}.

\subsection{$b$-manifolds, $b$-tangent and $b$-cotangent bundles}

When we are talking about $b$-manifold, we mean an oriented smooth
manifold $M$ with a closed smooth hypersurface $Z$.

\begin{definition}
A \textbf{$b$-manifold} is a pair $(M,Z)$, where $M$ is an oriented
smooth manifold $M$ and $Z$ is a closed smooth hypersurface in $M$.
We call the hypersurface $Z$ \textbf{singular hypersurface} (It may not be
connected). A \textbf{$b$-map} is a smooth orientation preserving
map
\begin{align*}
f:(M,Z)\longrightarrow(M',Z')
\end{align*}
such that $f:M\rightarrow M'$ is smooth orientation preserving,
$f^{-1}(Z')=Z$ and $f$ is transverse to $Z'$.
\end{definition}

The essential difference between $b$-manifold and a usual manifold
is the definition of tangent bundle. Assume $U$ is an open subset of
the manifold $U\subset M$, we use $\Gamma(U,\leftidx{^b}TM)$ to
denote the vector space consisting of vector fields on $U$, which
are tangent to $Z$ at points of $Z$.

\begin{definition}
We define the \textbf{$b$-tangent bundle} $\leftidx{^b}TM$ of a
$b$-manifold $(M,Z)$ to be the vector bundle defined by the locally
free sheaf $\Gamma(U,\leftidx{^b}TM)$. The $b$-cotangent bundle
$\leftidx{^b}T^*M$ is defined to be the dual bundle of
$\leftidx{^b}TM$.
\end{definition}
By the above definition, if we are at a point $p\in M \setminus Z$,
then the $b$-tangent space $\leftidx{^b}T_pM\simeq T_pM\simeq
\text{span}\{\frac{\partial}{\partial
x_1},\cdots,\frac{\partial}{\partial x_n}\}$, where
$\{x_1,\cdots,x_n\}$ is a local coordinate system. But if we are at
a point $p\in Z$, then $\leftidx{^b}T_pM\simeq
\text{span}\{f\frac{\partial}{\partial
f},\cdots,\frac{\partial}{\partial x_n}\}$. Here $f$ is a defining
function for the hypersurface $Z$ (we need to require $df\neq0$ at
points of $Z$) and $\{f,x_2,\cdots,x_n\}$ is a local coordinate
system around p.

Similarly we can define $b$-differential $k$-forms to be sections of
$\Lambda^k(\leftidx{^b}T^*M)$. We denote the space of
$b$-differential $k$-forms as $\leftidx{^b}\Omega^k(M)$. In general,
if we have a defining function $f$ for the hypersurface $Z$, then
every $b$-differential $k$-form can be written as
\begin{align*}
w=\alpha\wedge\frac{df}{f}+\beta \text{,\ }
\alpha\in\Omega^{k-1}(M)\text{\ and\ }\beta\in\Omega^k(M),
\end{align*}
where $\alpha$ and $\beta$ may not be unique. And using this we can
extend the exterior differential operator $d$ to
$\leftidx{^b}\Omega^*(M)$ by defining
$$dw=da\wedge\frac{df}{f}+d\beta.$$
This helps us to form a chain complex
$$0\rightarrow\leftidx{^b}\Omega^0(M)\rightarrow\leftidx{^b}\Omega^1(M)\rightarrow\cdots\rightarrow\leftidx{^b}\Omega^n(M).$$
The above \textbf{$b$-de Rham complex} will give us
\textbf{$b$-cohomology group} $\leftidx{^b}H^*(M)$. For more
details, one can find in \cite{guimipi12}.

The $b$-forms may blow up at points of $Z$, but we can still
integrate compactly supported $b$-forms over $M$: given a $b$-form
$\eta \in \leftidx{^b}{\Omega^n(M)}$, we define the integral of
$\eta$ over $M$ to be the singular integral
$$\leftidx{^b}\int_M\eta=\lim_{\epsilon\rightarrow 0}\int_{|f|>\epsilon}\eta,$$
where $f$ is a defining function of the hypersurface $Z$. This
definition does not depend on the choice of defining functions. See
\cite{scott}.

In this $b$-manifold case, if we have a local defining function $f$
for the hypersurface $Z$, then $\frac{df}{f}$ will be a $b$-form
which is closed but not exact. To make it exact we have to enlarge
the space of smooth functions on $b$-manifold $M$ to include
functions such as $\log|f|$. So we have the following definition.
\begin{definition}
Given a $b$-manifold $(M,Z)$, we define the sheaf of
\textbf{$b$-functions} on $M$, $\leftidx{^b}{\mathcal{C}}^{\infty}$
to be
\begin{align*}
\leftidx{^b}{\mathcal{C}}^{\infty}(U)=c\log|y|+f,
\end{align*}
where $f\in\mathcal{C}^\infty(U)$, $c\in\mathbb{R}$ and $y$ is a
defining function for $U\cap Z$.
\end{definition}
Such an enlargement will also help us to define $b$-moment map in
the future.

\subsection{$b$-symplectic manifolds}

In this subsection we will discuss $b$-symplectic manifolds.
\begin{definition}
Assume $(M,Z)$ is a $2n$ dimensional $b$-manifold. We say
$\omega\in\leftidx{^b}{\Omega}^2(M)$ is $b$-symplectic if it is a
closed $b$-form such that
$$\omega^n_p\neq0$$
for each point $p$, as an element in $\leftidx{^b}{T_p^{2n}M}$. When
a $b$-manifold $(M,Z)$ admits a $b$-symplectic form, we say it is a
$b$-symplectic manifold.
\end{definition}

Here is a very simple example.
\begin{example}
Consider the Euclidean space $\mathbb{R}^{2n}$ with coordinates
$\{x_1,y_1,\cdots,x_n,y_n\}$. Assume the hypersurface is
$\{x_1=0\}$. Then we have a $b$-symplectic form $\omega$,
$$\omega=\frac{dx_1}{x_1}\wedge dy_1+\cdots+dx_n\wedge dy_n.$$
\end{example}

\begin{definition}
A $b$-map $f$ between two $b$-symplectic manifolds $(M,Z)$ and
$(M',Z')$ is a $b$-symplectomorphism, if $f^*\omega'=\omega$, where
$\omega$ and $\omega'$ is the $b$-symplectic form of $M$ and $M'$
respectively.
\end{definition}

From the perspective of Poisson geometry we can also study
$b$-symplectic manifolds. Recall that there is a one-to-one
correspondence between Poisson structures on a manifold and
bi-vector fields $\pi\in\chi^2(M)$ (where by $\chi (M)$ we mean
vector fields on $M$), which satisfy the condition $[\pi,\pi]=0$ on
a manifold. If we view a bi-vector $\pi\in\chi^2(M)$ as a map
$${\pi}^{\#}:\Omega^1(M)\longrightarrow\chi(M)$$
which is defined by contraction, then it will be a bundle map
between $T^*M$ and $TM$. If it is an isomorphism we can consider its
inverse $\omega:TM\rightarrow T^*M$. And this gives us a symplectic
form. Conversely given a symplectic structure we can also construct
a Poisson structure just as above (We will get a non-degenerate
Poisson structure). In our case, if we do the construction for a
$b$-symplectic form we in fact get a Poisson bi-vector $\pi$ whose
top exterior product as a section of $\Lambda^{2n}(TM)$ vanishes
transversely on the hypersurface $Z$, and is nonzero at other points
(One can see this easily by Weinstein's result on local structure of
a Poisson manifold. See \cite{weinstein} and \cite{poisson}.). So
$b$-symplectic manifolds are in fact a special kind of Poisson
manifolds.

\subsection{Torus Hamiltonian actions on $b$-symplectic manifolds}
Let $\mathbb{T}^n$ be a $n$-dimensional torus and acts on a manifold
$M$. We use $\mathfrak{t}$ to denote its Lie algebra and
$\mathfrak{t}^*$ to denote the dual of its Lie algebra. In the
symplectic case we call it a \textbf{Hamiltonian action} if there
exists a equivariant \textbf{moment map} $\mu:M\rightarrow
\mathfrak{t}^*$ such that for every $X\in\mathfrak{t}$ we have,
$$d\mu^X=-\iota_{X^{\#}}\omega.$$
Here we use $\mu^X$ to denote the function
$\mu^X=\langle\mu,X\rangle$, and $X^{\#}$ is the vector field on $M$
corresponding to the infinitesimal action by $X$,
$$X^{\#}(p)=\frac{d}{dt}(\exp(tX)\cdot p).$$

In the setting of $b$-symplectic manifold, we can define Hamiltonian
action similarly.
\begin{definition}
A $\mathbb{T}^n$ action on a $b$-symplectic manifold $(M,Z,\omega)$
will be called \textbf{Hamiltonian} if for any $X$,
$Y\in\mathfrak{t}$, we have,
\begin{itemize}
\item the one form $\iota_{X^{\#}}\omega$ is $b$-exact, i.e. exists a primitive $H_X\in\leftidx{^b}{\mathcal{C}}^\infty(M)$;
\item $\omega(X^{\#},Y^{\#})=0$.
\end{itemize}
Such a Hamiltonian action will be called \textbf{toric} if it is an
effective action and dim($\mathbb{T}^n$)=$\frac{1}{2}$dim$(M)$.
\end{definition}

\begin{example}
We can consider the following simple example. Let
$(\mathbb{S}^2,Z,\omega)$ be the $2$-dimensional sphere with
standard $b$-symplectic form $\omega=\frac{dh}{h}\wedge d\theta$,
with $h\in[-1,1]$, $\theta\in[0,2\pi]$, and the corresponding
hypersurface is $Z=\{h=0\}$. The $\mathbb{S}^1$ action on the sphere
will be rotation with respect to the axis as in the picture below.
In another word, will be the flow of the vector field
$\frac{\partial}{\partial\theta}$. If we consider the function
$\mu(h,\theta)=\log|h|$, then we will have,
$$d\mu^X=-\iota_{X^{\#}}\omega.$$
Hence the action satisfies the definition above and the action is
actually a Hamiltonian action. Moreover, this is a toric action. The
image of the above function $\log|h|$ is as in the picture.
\end{example}
\begin{figure}[ht]
\includegraphics[width=5.9cm,height=3.5cm]{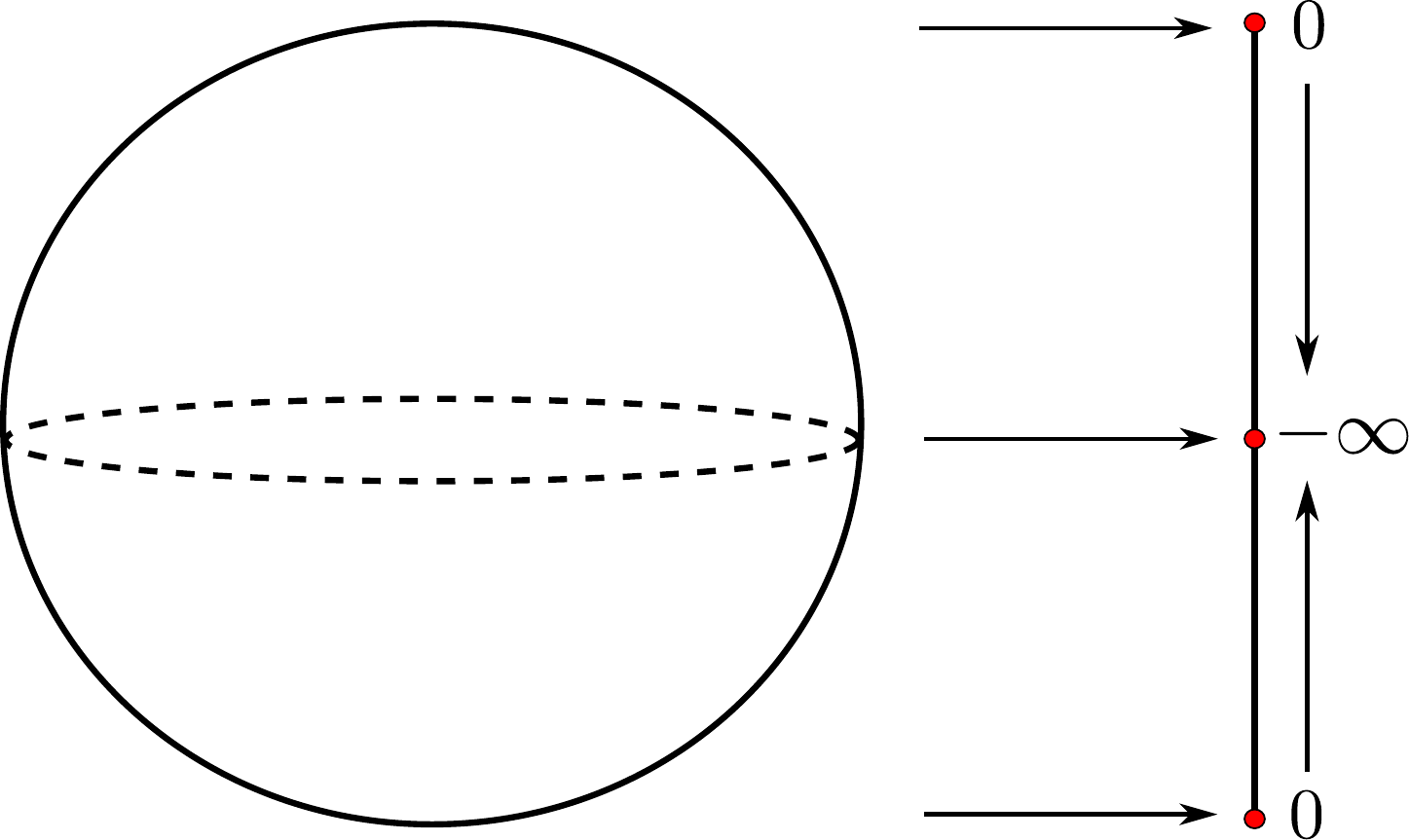}
\caption{The "moment map" of the $\mathbb{S}^1$-actions on $M$.}
\label{fig:sphere}
\end{figure}
Moreover, if we consider
$(\mathbb{T}^2,Z=\{\theta_1\in\{0,\pi\}\},\omega=\frac{d\theta_1}{\sin\theta_1}\wedge
d\theta_2)$, and let the torus $\mathbb{S}^1$ act like rotation on
$\theta_2$ coordinate. Then if we select the "moment map" as
$\log|\frac{1+\cos\theta_1}{\sin\theta_1}|$, the $\mathbb{S}^1$
action will also be a Hamiltonian action. The important point is
that the torus $\mathbb{T}^2$ does not admit a symplectic toric
structure. For details see \cite{btoric}.

\subsection{$b$-moment maps and $b$-Delzant polytopes}

As in \cite{btoric}, they showed that there exists a $b$-moment map
and its image will satisfies some kind of Delzant property. But
because of the fact that our $b$-symplectic form blows up along the
hypersurface $Z$, the image of our $b$-moment map cannot fall in one
Euclidean space like $\mathfrak{t}^*$ as before. We conclude the
results and sketch the proofs that appear in \cite{btoric}.
\begin{definition}
Given a Hamiltonian $\mathbb{T}^k$ action on our $b$-manifold, for
each connected component $Z'$, by the hypothesis that the action is
Hamiltonian, the $b$-form $\iota_{X^{\#}}\omega$ has a primitive in
$\leftidx{^b}{\mathcal{C}}^\infty$. This primitive can be written as
$c\log|y|+g$ in a neighborhood of $Z'$, where $y$ is the local
defining function of $Z'$, $c\in\mathbb{R}$ and $g$ is smooth. The
map $X\mapsto c$ is a well-defined homomorphism, and will correspond
to an element $\nu_{Z'}\in\mathfrak{t}^*$. This is called the
\textbf{modular weight} of $Z'$. The kernel of the modular weight
$\nu_Z'$ will be denoted as $\mathfrak{t}_Z'$.
\end{definition}

In \cite{btoric} they proved that such a modular weight $\nu_Z$ for
toric action is nonzero. Moreover, it was also proved that for toric
action if $Z_1$ and $Z_2$ are two adjacent (i.e. adjacent to the
same connected component of $M\backslash Z$), then the modular
weights of $Z_1$ and $Z_2$ satisfy:
$$\nu_{Z_1}=k\nu_{Z_2}\text{, for some }k<0.$$
That is to say, all the modular weights are in the "same direction",
but for two adjacent modular weights, they will have different sign.
We will see in the future this will help us to glue different
$\mathfrak{t}^*$ together. See claim 20 in \cite{btoric}.

\begin{definition}
The \textbf{adjacency graph} of a $b$-manifold $(M,Z)$ is a graph
$G=(V,E)$. It has a vertex for each component of $M\backslash Z$ and
an edge for each connected component of $Z$.
\end{definition}

\begin{definition}
Given a Hamiltonian $\mathbb{T}^k$ action on a $b$-symplectic
manifold $(M,Z)$, we define the \textbf{weighted adjacency graph} of
the $b$-symplectic manifold with the Hamiltonian action above to be
$\mathcal{G}=(G,w)$, where $G$ is the adjacency graph of $M$ and $w$
is a weight function on the set of edges
$w:E\rightarrow\mathfrak{t}^*$. The weight function $w$ is defined
to be, mapping an edge $e\in E$ to the modular weight of the
corresponding connected component in $Z$.
\end{definition}

In the case of toric action, . To help the reader understand those
concepts, we consider the b-symplectic toric manifolds
$\mathbb{S}^2$ and $\mathbb{T}^2$ mentioned above as examples.

\begin{example}
For the sphere case, there will be two vertexes for the two
components in $M\backslash Z$ and only one edge for the only one
component in $Z$. For the torus case, similarly, there will exist
two vertexes and two edges. But in this case it will form a closed
loop, as shown in the following pictures.
\end{example}

\begin{figure}[ht]
\includegraphics[height=3.5cm]{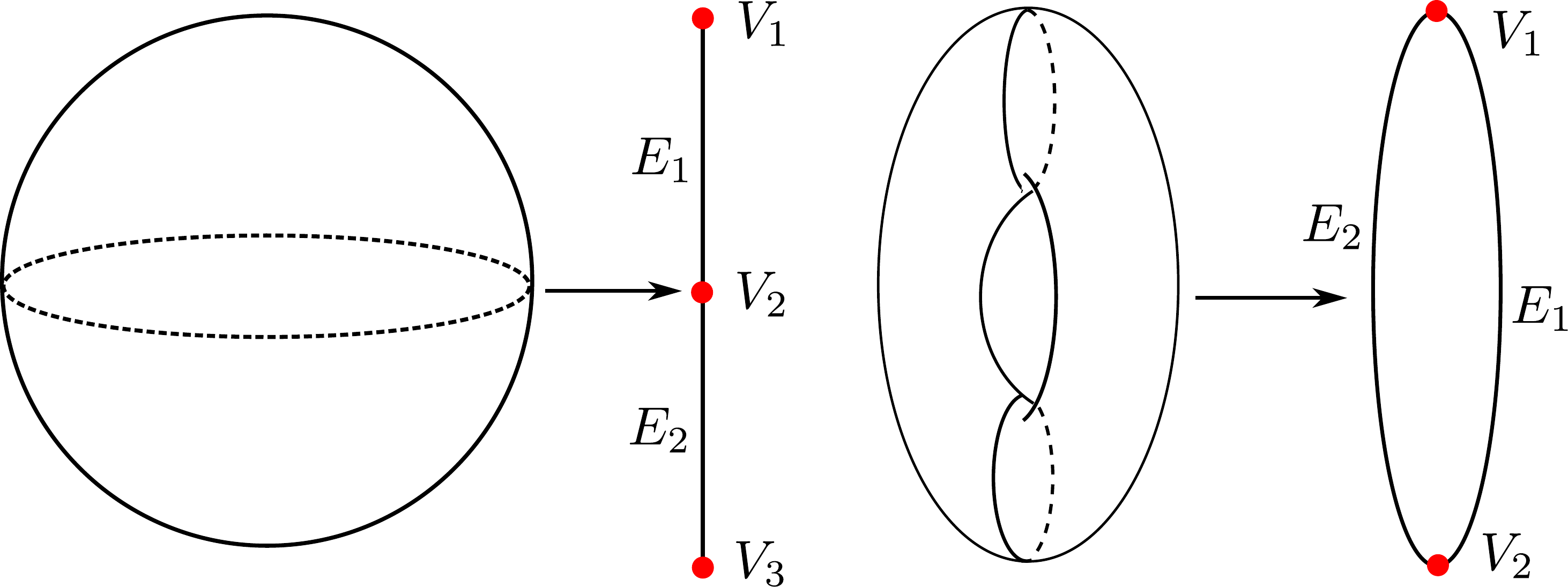}
\caption{The adjacency graph of $\mathbb{S}^2$ and $\mathbb{T}^2$.}
\label{fig:graph1}
\end{figure}

Now, have already defined the weighted adjacency graph of a
$b$-symplectic manifold with a Hamiltonian action, we can now
construct the moment codomain of a $b$-symplectic toric manifold. We
construct the \textbf{$b$-moment codomain}
$(\mathcal{R}_{\mathcal{G}},\mathcal{Z}_{\mathcal{G}},\hat{x})$,
where $(\mathcal{R}_{\mathcal{G}},\mathcal{Z}_{\mathcal{G}})$ is a
$b$-manifold and
$\hat{x}:\mathcal{R}_{\mathcal{G}}\backslash\mathcal{Z}_{\mathcal{G}}\rightarrow\mathfrak{t}^*$
is a smooth map, such that:
\begin{itemize}
\item If $G$ is a single vertex, we define the triple
$(\mathcal{R}_{\mathcal{G}},\mathcal{Z}_{\mathcal{G}},\hat{x})=(\mathfrak{t}^*,\varnothing,id)$;
\item If $G$ has at least two vertices, then for each $e\in E$, we have the
corresponding modular weight $w(e)$. Let
$\mathfrak{t}_w=(w(e))^\perp\subset\mathfrak{t}$. We will use
modular weights to glue different $\mathfrak{t}^*$ together as said
before. As a set, we define:
\begin{align*}
\mathcal{R}_{\mathcal{G}}&=\mathfrak{t}^*\times V
\sqcup\mathfrak{t}_w^*\times E\\
\mathcal{Z}_{\mathcal{G}}&=\mathfrak{t}_w^*\times E.
\end{align*}
and define $\hat{x}((x,v))=x$.
\end{itemize}

$\mathcal{R}_{\mathcal{G}}$ can be endowed with a smooth structure
such that $(\mathcal{R}_{\mathcal{G}},\mathcal{Z}_{\mathcal{G}})$ is
a $b$-manifold, which also makes $\hat{x}$ is a smooth map. For
technical details see \cite{btoric}. And this will be the codomain
of the moment map.

\begin{definition}
Given a $b$-symplectic manifold with a effective Hamiltonian
$\mathbb{T}^k$ action such that the modular weights are all nonzero.
Then a $b$-\textbf{moment map} for an effective Hamiltonian
$\mathbb{T}^k$ action will be a $\mathbb{T}^k$-invariant b-map
$\mu:M\rightarrow \mathcal{R}_{\mathcal{G}}$ such that for any $X\in
\mathfrak{t}$, the function
$\mu^X(p)=\langle\hat{x}\circ\mu(p),X\rangle$ is linear with respect
to $X$, and satisfies:
$$\iota_{X^{\#}}\omega=-d\mu^X$$
\end{definition}

It was proved that for any $b$-symplectic toric manifold there
always exists a $b$-moment map. See theorem 27 in \cite{btoric}.

\begin{example}
We still consider the sphere $\mathbb{S}^2$, but this time we can
consider the case where there are more than one singular
hypersurfaces. The image of the moment map will be as below.
\end{example}

\begin{figure}[ht]
\includegraphics[height=3.5cm]{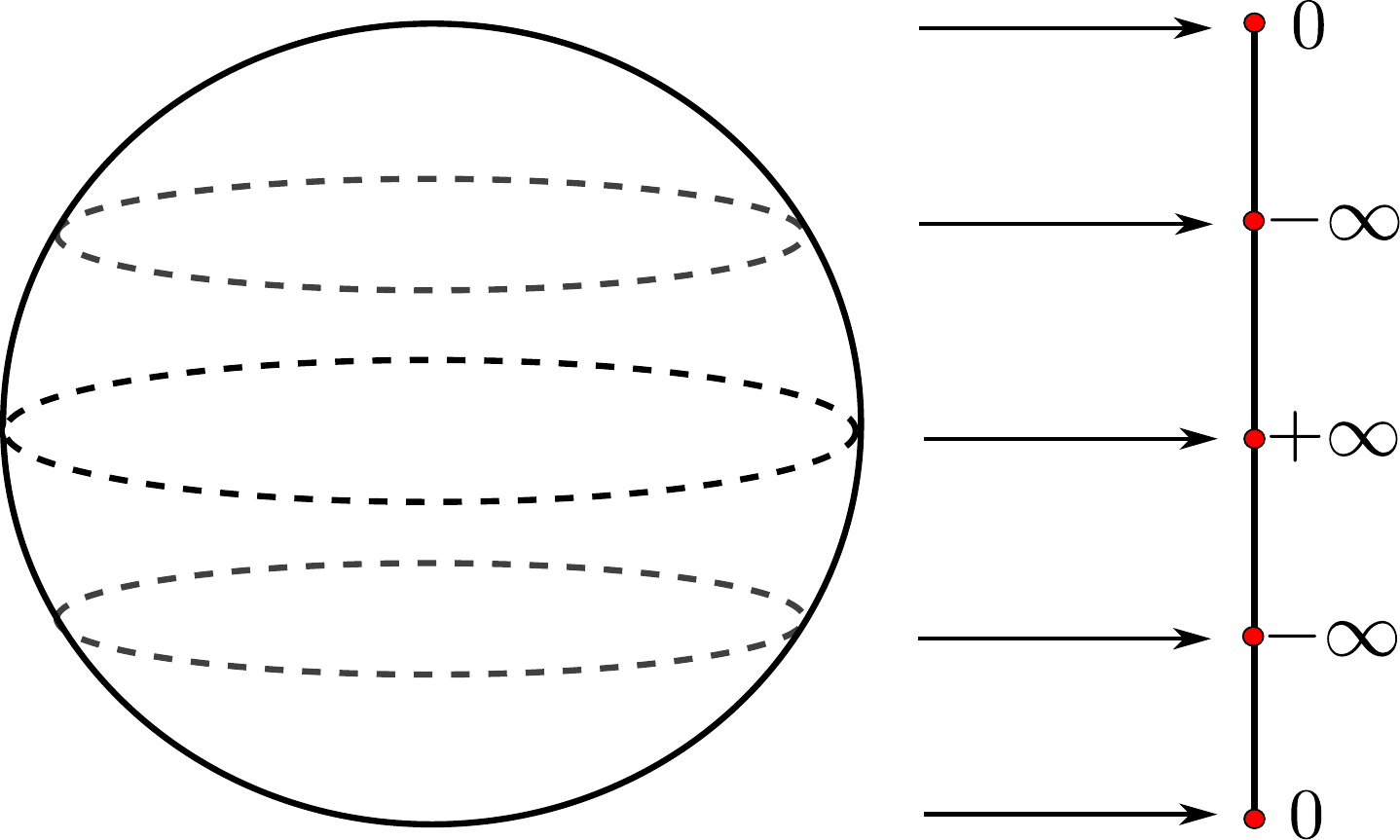}
\caption{The $b$-moment map image of $\mathbb{S}^2$ with many singular hypersurfaces.}
\label{fig:sphere2}
\end{figure}

\section{The $b$-Delzant classification}\label{section btoric}

In this section, we will describe the $b$-Delzant type theorem
proved in \cite{btoric}. We first need to define what is a rational
convex polytope, or a $b$-Delzant polytope in
$\mathcal{R}_{\mathcal{G}}$.

\begin{definition}
We use $A_{X,k,v}$ and $B_{Y,k}$ to denote the following two kinds
of hyperplanes in $\mathcal{R}_{\mathcal{G}}$, where
$X\in\mathfrak{t}$, $Y\in\mathfrak{t}_{w}$, $k\in\mathbb{R}$ and $v$
is a vertex of $G$.
$$A_{X,k,v}=\{(v,\xi)|\langle\xi,X\rangle=k\}\subset{v}\times\mathfrak{t}^*\subset\mathcal{R}_{\mathcal{G}},$$
$$B_{Y,k}=\overline{\{(v,\xi)|\langle\xi,Y\rangle=k\text{, }v\text{ a vertex of }G\}}\subset\overline{\mathcal{R}_{\mathcal{G}}\backslash\mathcal{Z}_{\mathcal{G}}}=\mathcal{R}_{\mathcal{G}}.$$
We call the closure of each component in the complement of the
hyperplane a \textbf{half-space} in $\mathcal{R}_{\mathcal{G}}$.
\end{definition}

The following picture shows some examples of those hyperplanes.
\begin{figure}[ht]
\includegraphics[width=4cm,height=6cm]{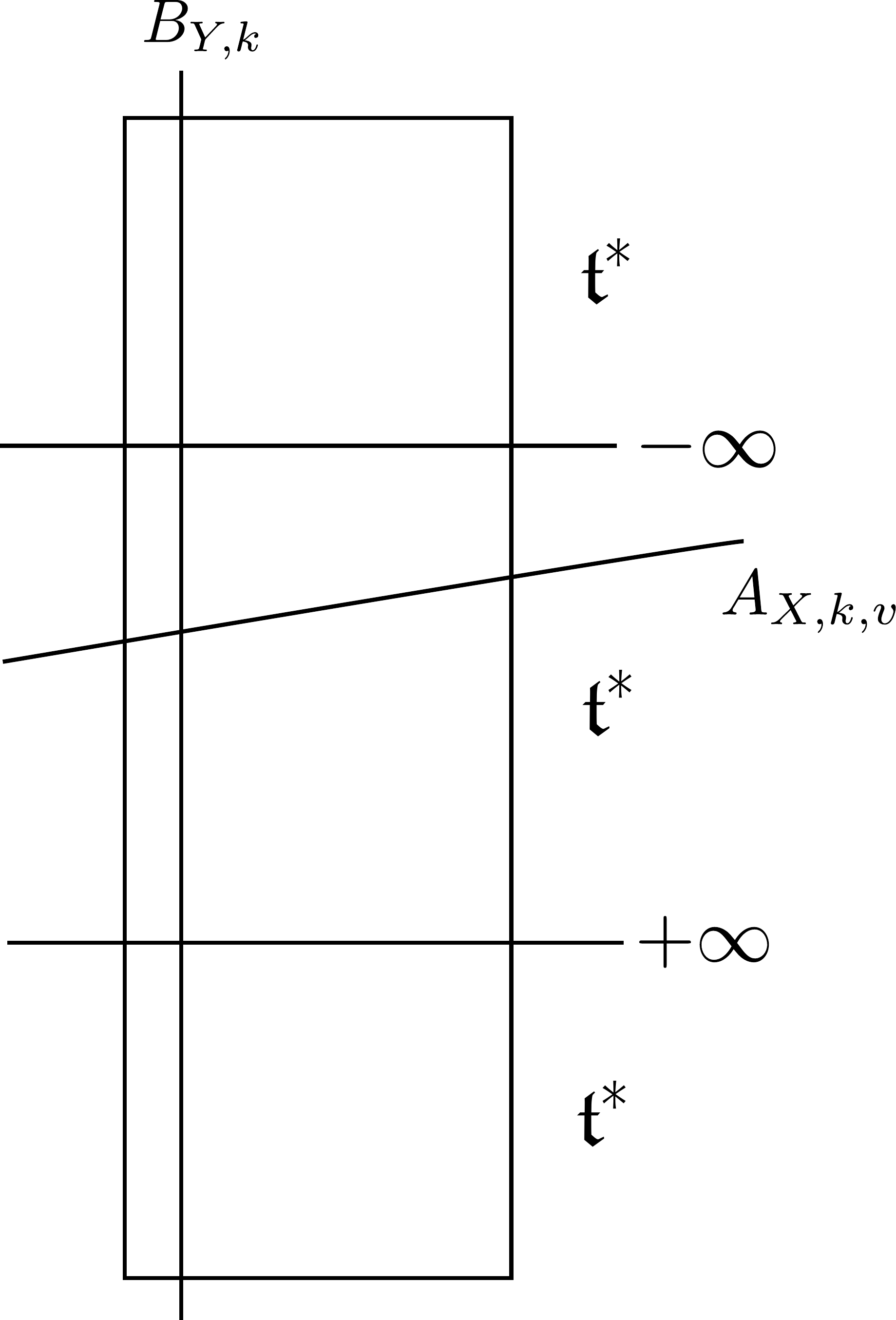}
\caption{Examples of hyperplanes.}
\label{fig:hyperplane}
\end{figure}
\begin{definition}
A $b$-\textbf{polytope} $\Delta$ in $\mathcal{R}_{\mathcal{G}}$ is a
bounded connected set such that each component of
$\Delta\cap\mathcal{R}_{\mathcal{G}}\backslash\mathcal{Z}_{\mathcal{G}}$
is an intersection of half-spaces.
\end{definition}

The definition of $b$-Delzant is similar to the usual definition.
For a $b$-polytope we can talk about vertices, facets and edges as
before.

\begin{definition}
When $G$ is a line, we say a $b$-polytope is $b$-\textbf{Delzant} if
for every vertex $v$ of $\triangle$, there exists a lattice basis
$\{u_i\}$ of $\mathfrak{t}^*$ such that the edges linking to $v$ can
be written as $v+tu_i$ for $t\geqslant0$. When $G$ is a circle, a
$b$-polytope is $b$-\textbf{Delzant} if the polytope
$\Delta_Z=\Delta\cap\mathfrak{t}^*_w$ is Delzant.
\end{definition}

The following picture is an example of $b$-Delzant polytope. In this
picture it shows a $b$-Delzant polytope in a $b$-codomain consisting
of two $\mathfrak{t}^*$ glued together along some modular weight
direction.
\begin{figure}[ht]
\includegraphics[width=3.2cm,height=3.5cm]{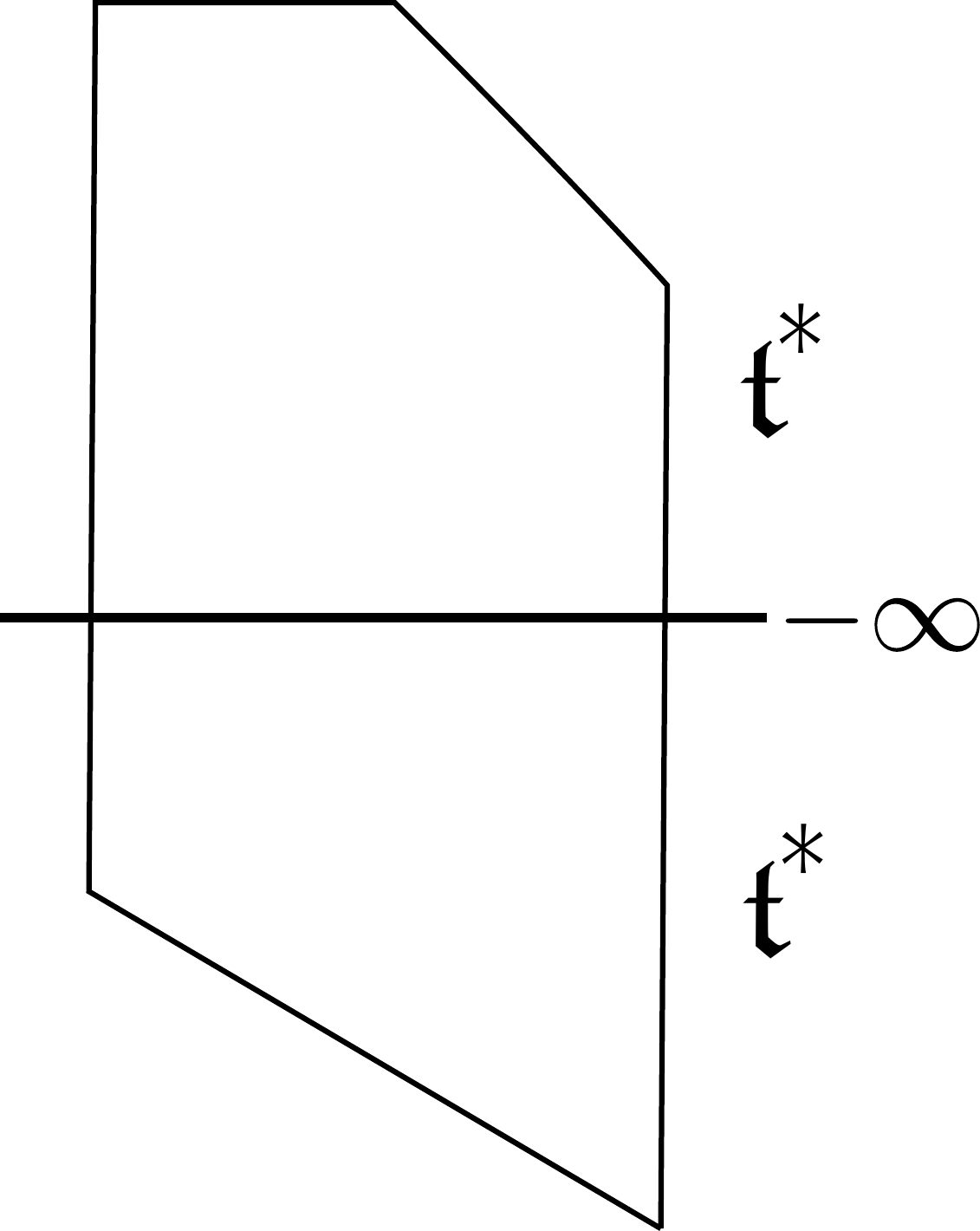}
\caption{Example of $b$-Delzant polytope.}
\label{fig:Delzant}
\end{figure}
Now we are able to state the $b$-Delzant type classification
theorem.
\begin{theorem}
The $b$-moment map
$$\{\text{$b$-toric manifolds }(M,Z,\omega)\}\longrightarrow\{\text{$b$-Delzant polytopes}\}$$
which sends each $b$-toric manifold to its $b$-moment map image is a
bijection. Here $b$-toric manifolds are considered up to equivariant
$b$-symplectomorphisms that preserves the moment map
\end{theorem}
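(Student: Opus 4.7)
The plan is to prove the bijection by separately establishing injectivity and surjectivity; in both directions the strategy is to reduce to the classical Delzant theorem on each connected component of $M \setminus Z$, and then to control the behavior across the singular hypersurface $Z$ using an equivariant $b$-symplectic normal form.

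For injectivity, suppose $(M_1, Z_1, \omega_1)$ and $(M_2, Z_2, \omega_2)$ are $b$-toric manifolds whose $b$-moment maps have the same image $\Delta \subset \mathcal{R}_{\mathcal{G}}$. On each connected component of $M_i \setminus Z_i$ the action is genuinely symplectic toric, with moment image a Delzant polytope sitting in the copy of $\mathfrak{t}^*$ indexed by the corresponding vertex of $G$; the classical Delzant theorem then gives an equivariant symplectomorphism between matched components. The main task is to show these partial symplectomorphisms extend across $Z$. I would establish an equivariant normal form in a tubular neighborhood of each connected component $Z'$: once the modular weight $\nu_{Z'}$ and the transverse Delzant data on either side of $Z'$ are fixed, the pair consisting of the torus action and the $b$-form $\omega$ is determined up to equivariant $b$-symplectomorphism by an equivariant Moser argument adapted to $b$-forms. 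Patching the component-wise symplectomorphisms through these local models then yields a global equivariant $b$-symplectomorphism intertwining the two moment maps.

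For surjectivity, given a $b$-Delzant polytope $\Delta \subset \mathcal{R}_{\mathcal{G}}$, I would construct $M$ vertex by vertex. For each vertex $v$ of $G$, the piece $\Delta_v = \Delta \cap (\{v\} \times \mathfrak{t}^*)$ is a Delzant polyhedron whose facets corresponding to edges of $G$ incident to $v$ have conormals $\pm \nu_{Z'}$; either by classical Delzant applied to a suitable compactification or by symplectic cutting as in \cite{btoric}, this produces a symplectic toric manifold $M_v$ whose moment image recovers $\Delta_v$. The sign relation $\nu_{Z_1} = k \nu_{Z_2}$ with $k < 0$ for adjacent edges ensures that the preimages in two adjacent $M_v$ of the shared facet are equivariantly diffeomorphic, so one can excise tubular neighborhoods of these facets and glue along matching collars. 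A direct computation in the local model $\frac{dx_1}{x_1} \wedge dy_1 + \sum_{i \geq 2} dx_i \wedge dy_i$ with the standard torus action shows that the glued form is genuinely $b$-symplectic and that its $b$-moment map recovers $\Delta$. When $G$ is a circle one checks additionally that traversing the cycle closes up, which is guaranteed by the hypothesis that $\Delta \cap \mathfrak{t}^*_w$ is Delzant.

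The main obstacle in both directions is the analysis at $Z$. One needs a rigid equivariant local model for $b$-symplectic toric actions near each component of $Z$ (for injectivity) and a concrete collar-compatible cutting-and-gluing procedure along facets carrying matched modular weights (for surjectivity). Both steps rely on an equivariant Moser or coisotropic embedding argument adapted to the $b$-setting, handling the logarithmic singularity of $\omega$ through the local defining function that appears in $\nu_{Z'}$. Once this local rigidity is in place, the global statement assembles naturally from the combinatorics encoded by the weighted adjacency graph $\mathcal{G}$.
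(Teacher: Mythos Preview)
The paper does not give its own proof of this theorem: it simply refers to \cite{btoric} for injectivity, and for surjectivity it records the one-line strategy from \cite{btoric}, namely to start from the global model $X_{\Delta_Z}\times\mathbb{S}^2$ (where $\Delta_Z = \Delta\cap\mathfrak{t}_w^*$) and perform symplectic cuts to carve out the remaining facets of $\Delta$. Your proposal is therefore considerably more detailed than anything the paper writes down for this statement.

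On injectivity your outline (Delzant on each component of $M\setminus Z$, equivariant $b$-Moser across $Z$) is essentially the argument carried out in \cite{btoric}, so there is nothing to object to beyond the fact that the Moser step in the $b$-category genuinely requires care and you have only named it rather than executed it.

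On surjectivity your route is genuinely different from the one the paper cites. The approach in \cite{btoric} is top-down: begin with a single product $X_{\Delta_Z}\times\mathbb{S}^2$ whose $b$-polytope is $\Delta_Z\times[-r,r]$ with the singular hyperplane built in, and then cut. Your approach is bottom-up: build a classical toric manifold for each vertex of $G$ and glue across collars to manufacture the singular hypersurface. Interestingly, your bottom-up idea is precisely the main construction this paper develops in Sections~\ref{section construction} and~\ref{section decomposition}, where it is presented not as a proof of the classification theorem but as an alternative, more hands-on way to realize $b$-toric manifolds from ordinary toric pieces. The cutting approach has the advantage that the $b$-symplectic form is present from the start and one never has to check that a glued form is closed and nondegenerate across $Z$; your gluing approach, as the paper's Section~\ref{section construction} shows in detail, buys more explicit topological control of the resulting manifold but requires the partition-of-unity and local-model arguments worked out there (Theorems on the local model and on cutting/gluing) to verify that the glued object is honestly $b$-symplectic and Hamiltonian.
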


For the proof of injectivity, see \cite{btoric}. For the proof of
surjectivity, one can simply consider a standard model
$X_{\Delta_Z}\times\mathbb{S}^2$ and then applying symplectic
cuttings, as it was done in \cite{btoric}.

\section{Construction from toric manifolds}\label{section construction}

Now we come to the main part of the paper. In this part we come up
with a way constructing $b$-toric manifold from the information of
the $b$-Delzant polytope. This is mainly done by doing surgeries to
toric manifolds and gluing them together.

\noindent\textbf{Caution.}
It should be pointed out that the image of the moment map, i.e., the
Delzant polytope is allowed to be transformed under
$\rm{SL}(n,\mathbb{Z})$ and be translated by some constants, because
the action is $\mathbb{T}^n$ which is abelian. The later one can
also be done for $b$-moment map image.

\subsection{Observation}
In order to get a feeling for this construction, we first consider a
simple example. For example, consider Figure \ref{fig:Delzant}. One
may wonder is there any link between the $b$-toric manifold
corresponding to Figure \ref{fig:Delzant} and the toric manifold
with Delzant polytope as below.
\begin{figure}[ht]
\includegraphics[width=2cm,height=3.5cm]{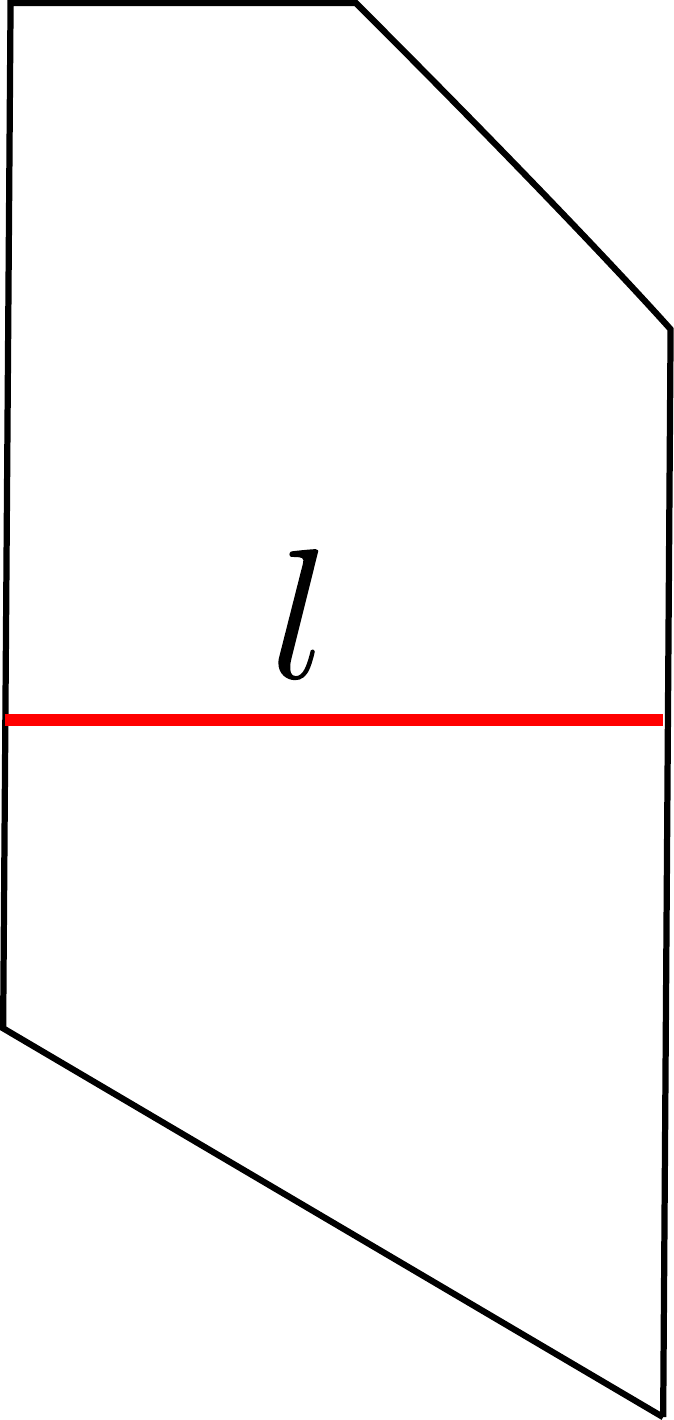}
\caption{A usual Delzant polytope $\Delta$ in
$\mathbb{R}^2=\rm{Lie}(\mathbb{T}^2)^*$.} \label{fig:normal}
\end{figure}

We denote the corresponding toric manifolds as $X_\Delta$, its
symplectic form as $\omega$ and moment map as $\mu$. If we put
ourself in the so-called symplectic coordinates, i.e. $(\Delta\times
\mathbb{T}^2,(x,\xi))$ with symplectic form $dx\wedge d\xi$ (see,
for example, \cite{abreu1}), it seems that we should blow up the
symplectic form $\omega$ around the hypersurface $l$. This can be
easily done, one just needs to split the polytope to three part as
below.
\begin{figure}[ht]
\includegraphics[width=4cm,height=3.5cm]{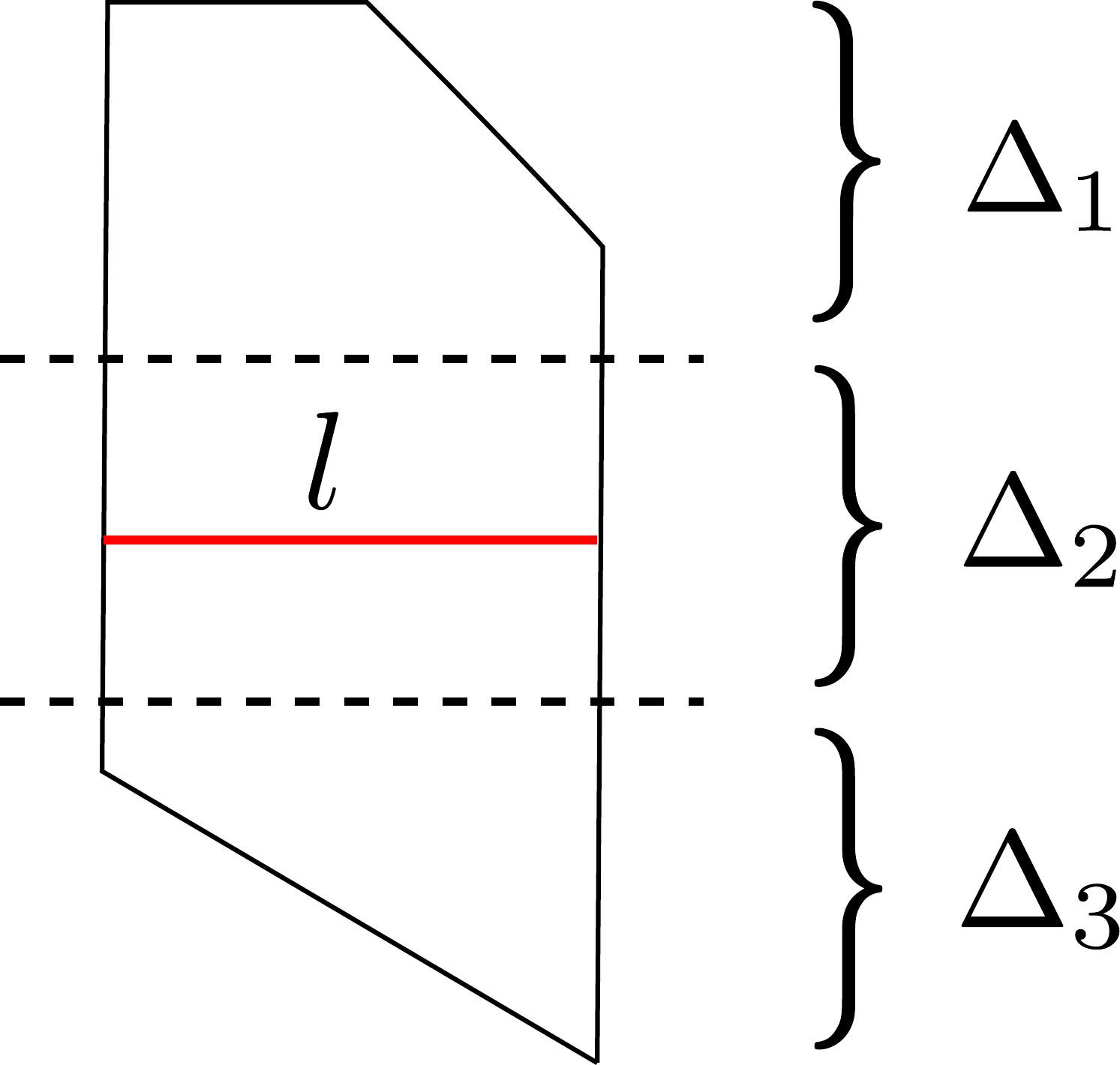}
\caption{A partition of $\Delta$}
\label{fig:normal2}
\end{figure}

Now we can simply keep the symplectic form in part $\Delta_1$ and
denote it by $\omega_1$, replace the symplectic form in $\Delta_2$
by a $b$-symplectic form $\omega_2$ with singular hypersurface $l$
and in part $\Delta_3$, if we use $x_1$ to denote the horizontal
coordinate, $x_2$ to denote the vertical coordinate, we should
replace $dx_2$ by $-dx_2$ in the symplectic form. We denote the
resulted symplectic form in $\Delta_3$ by $\omega_3$. Then simply
use partition of unity to add the three forms together, i.e.
$$\tilde{\omega}=\rho_1\omega_1+\rho_2\omega_2+\rho_3\omega_3.$$

But we will find many problems in this kind of operation:
\begin{itemize}
\item From the symplectic coordinate one still needs to do compactification to get an actual toric variety. Is this "hypersurface" $l$ in the symplectic coordinate actually
a hypersurface in $X_\Delta$ ?
\item For the same reason as above, we also need to worry about
wether the reversed symplectic form in $\Delta_3$ is compatible with
our compactification.
\end{itemize}

As we shall see, it will be a hypersurface and the first question
can be solved. However, the second one is an actual problem, and the
following two examples will help us to see why.

\begin{example}
This is a very simple one. Consider the $b$-toric manifold
$(\mathbb{S}^2\times\mathbb{S}^2,Z=\{h_2=0\},\omega=dh_1\wedge
d\theta_1+\frac{1}{h_2}dh_2\wedge d\theta_2)$, with toric
$\mathbb{T}^2$ action by rotation along the flows generated by
$\frac{\partial}{\partial\theta_1}$ and
$\frac{\partial}{\partial\theta_1}$. It is easy to see that the
$b$-moment map should be $h_1+\log|h_2|$, and the $b$-Delzant
polytope corresponding to it should be as the Figure
\ref{fig:doubulesphere}.

For this $b$-toric manifold, if we start with the toric manifold
$(\mathbb{S}^2\times\mathbb{S}^2,\omega=dh_1\wedge
d\theta_1+dh_2\wedge d\theta_2)$, doing the pinching process as
described above, we can get the desired $b$-toric manifold
$\mathbb{S}^2\times\mathbb{S}^2$. Indeed, if we split it to three
parts as before, and select the partition function carefully, we can
get $b$-toric $\mathbb{S}^2\times\mathbb{S}^2$.
\begin{figure}[ht]
\includegraphics[width=4.5cm,height=4cm]{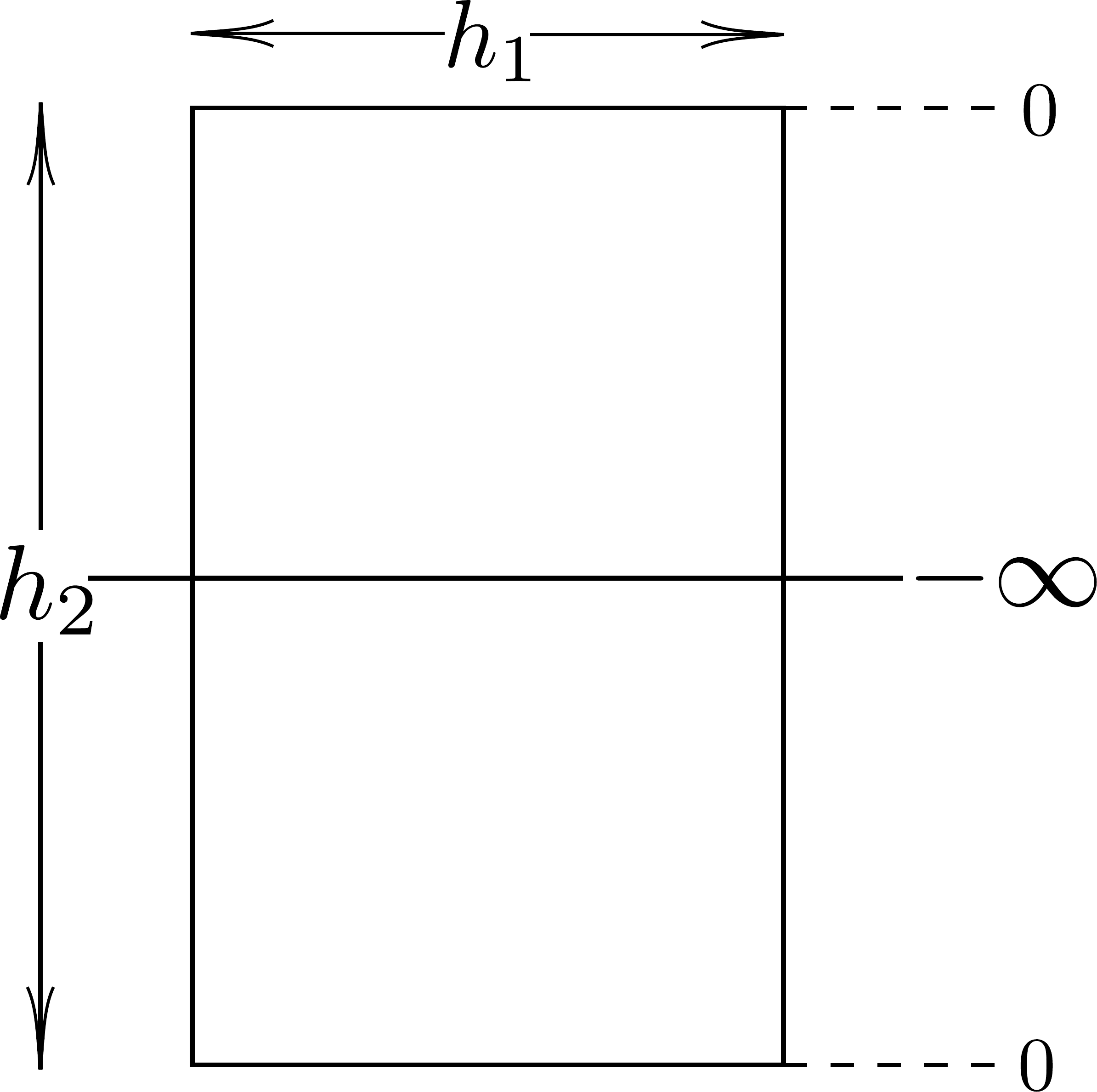}
\caption{$b$-Delzant polytope of $\mathbb{S}^2\times\mathbb{S}^2$.}
\label{fig:doubulesphere}
\end{figure}
\end{example}

\begin{example}
This is a negative example due to the second problem. Consider the
following $b$-Delzant polytopes corresponding to a $b$-toric
manifold $M$ and $M'$ respetively.
\begin{figure}[ht]
\includegraphics[width=8cm,height=4cm]{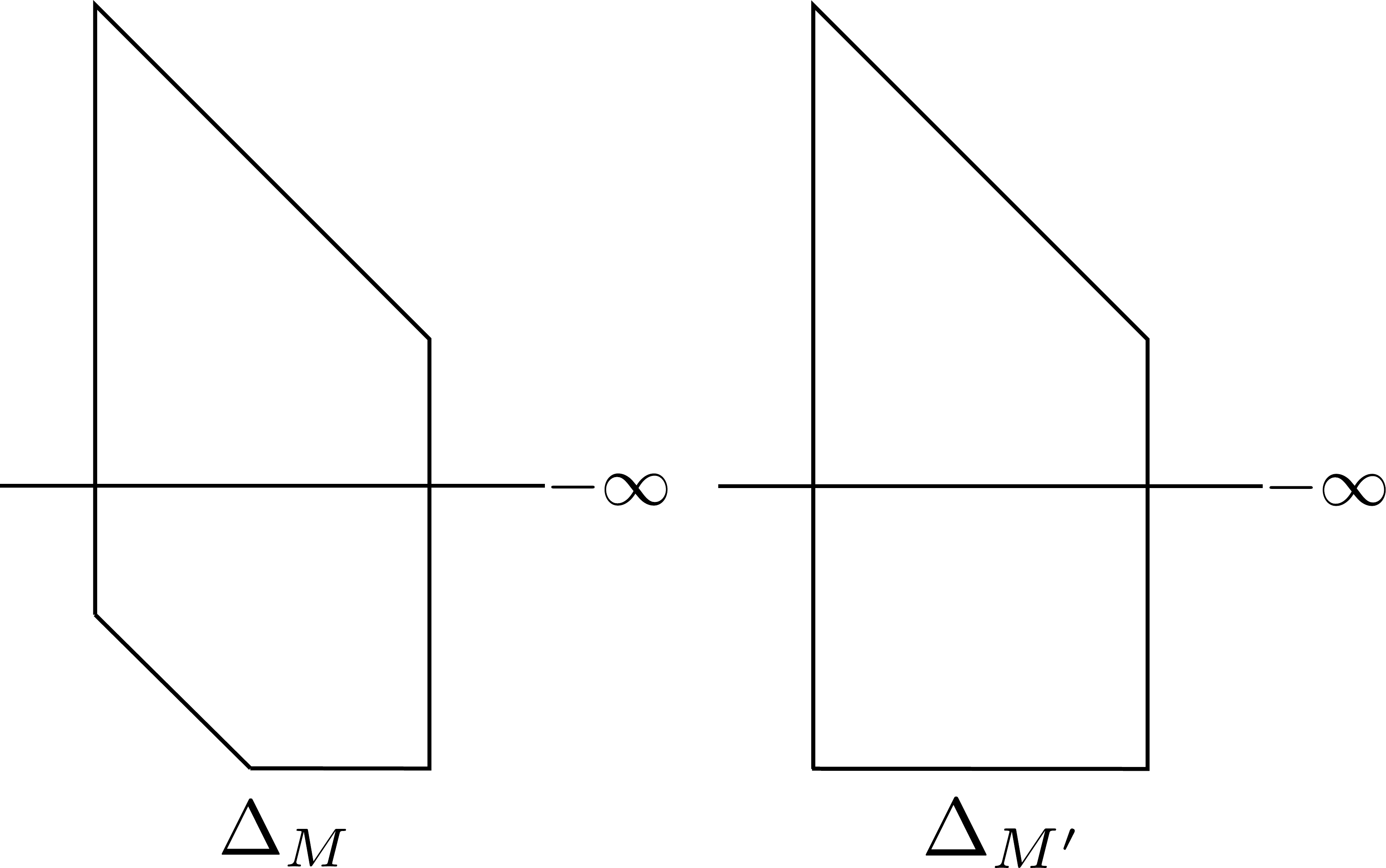}
\caption{$b$-Delzant polytopes of $M$ and $M'$.}
\label{fig:singularblowhizebruch}
\end{figure}

The corresponding usual Delzant polytope of $M$ is the moment
polytope of Hirzebruch surface
$H_1=\mathbb{P}({\mathcal{O}(1)\oplus1})\simeq\mathbb{CP}^2\#\overline{\mathbb{CP}^2}$
chopped at one vertex, where $\mathcal{O}(1)\oplus1$ means the
direct sum of tautological line bundle over $\mathbb{CP}^1$ and
trivial bundle over $\mathbb{CP}^1$. See for example \cite{silva2}.
According to \cite{guilleminsternberg3} this will be equivalent to
do symplectic blow up at the corresponding fixed point. That is to
say, the toric manifold corresponds to
$\mathbb{CP}^2\#\overline{\mathbb{CP}^2}\#\overline{\mathbb{CP}^2}.$
And hence, the base manifold of $M$ should be
$\mathbb{CP}^2\#\overline{\mathbb{CP}^2}\#\overline{\mathbb{CP}^2}$.\

However, for the same reason $M$ should be the blow up of $M'$ at
the corresponding vertex. But this time, due to the fact that the
symplectic form blow the $-\infty$ line is reversed, so it should be
$\mathbb{CP}^2\#\overline{\mathbb{CP}^2}\#{\mathbb{CP}^2}.$

Now if the construction above works, we would conclude that
$$\mathbb{CP}^2\#\overline{\mathbb{CP}^2}\#\overline{\mathbb{CP}^2}\simeq\mathbb{CP}^2\#\overline{\mathbb{CP}^2}\#{\mathbb{CP}^2}.$$
which is absurd.
\end{example}

From the example above, we can see that the essential reason for
this to be wrong is that the orientation of one half of it is
reversed.

\begin{remark}
There is something more one can talk about
$\mathbb{CP}^2\#\overline{\mathbb{CP}^2}\#{\mathbb{CP}^2}$. By
computing Seiberg-Witten invatiant in \cite{taubes} it was showed
that it does not admit any symplectic structure which is compatible
with its orientation. One can also refer to \cite{moore}. So it is
impossible for it coming from a toric manifold. This also provide us
with an example which admits $b$-symplectic structure but does not
admit a (orientation compatible) symplectic structure.
\end{remark}

\subsection{Toric submanifolds}

In this subsection we deal with the first problem appears in the
above subsection. We only care about a special kind of Delzant
polytope, which is useful in our construction.

\begin{definition}
Given $2n$ dimensional toric manifold $(M,\mathbb{T}^n)$ and a $2m$
dimensional symplectic submanifold $M'$, we say $M'$ is a
\textbf{toric submanifold} of $M$ if there exists a
$\mathbb{T}^m\subset\mathbb{T}^n$ such that the action by
$\mathbb{T}^m$ preserves $M'$.
\end{definition}

\begin{definition}
We say a Delzant polytope $\Delta$ is \textbf{parallel} if it
satisfies the condition that there exists one hyperplane $F$
intersecting $\Delta$, and each $1$ dimensional facet $l$ in
$\Delta$ which intersects $F$, intersects $F$ orthogonally. Such a
hyperplane will be called \textbf{parallel hyperplane}. We denote a
parallel Delzant polytope by $(\Delta,F)$.
\end{definition}

Here is one example of parallel Delzant polytope.
\begin{figure}[ht]
\includegraphics[width=4cm,height=4cm]{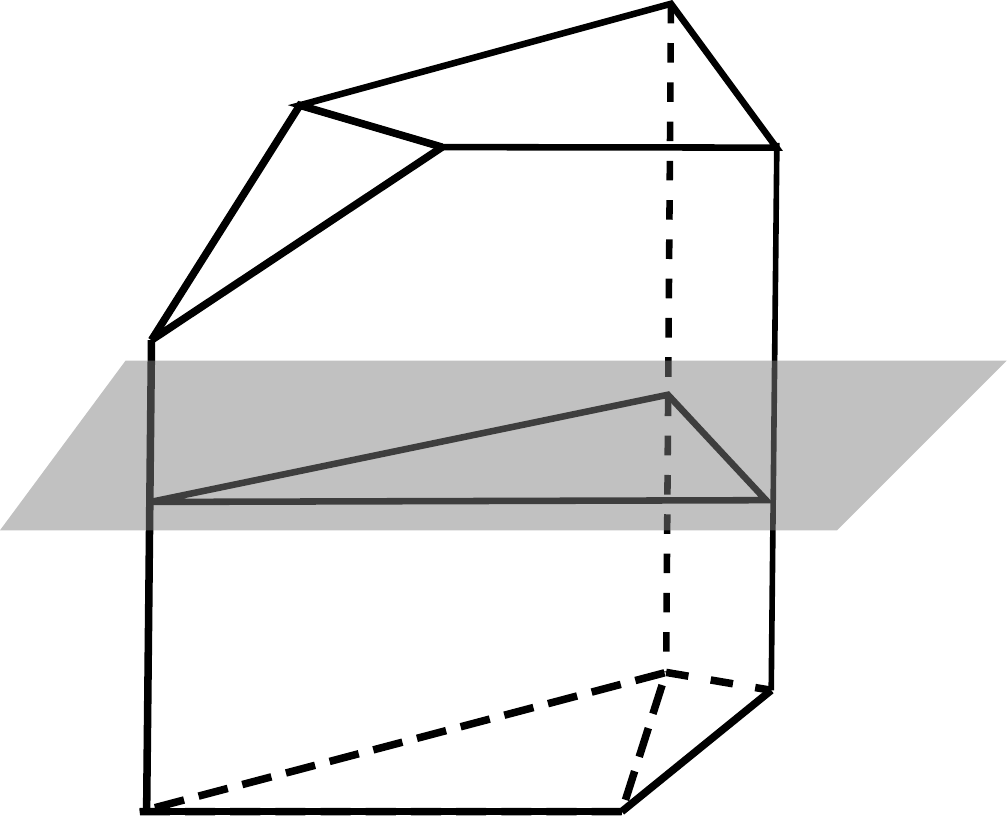}
\caption{Example of parallel Delzant polytope.}
\label{fig:parallel}
\end{figure}

\begin{lemma}
Let $\Delta_F=F\cap\Delta$. Then $\Delta_F$ is also Delzant.
\end{lemma}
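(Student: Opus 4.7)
The plan is to verify the Delzant condition at each vertex of $\Delta_F$ by a local argument. First I claim that every vertex of $\Delta_F$ has the form $v = e \cap F$ for some edge $e$ of $\Delta$ that meets $F$ in its relative interior. Indeed, the parallel hypothesis forces $F$ to be orthogonal to every edge of $\Delta$ it meets, so $F$ cannot pass through a vertex of $\Delta$: at such a vertex the $n$ incident edges would all have to be orthogonal to the $(n-1)$-dimensional hyperplane $F$, which is impossible for $n \geq 2$. Hence $F$ meets each edge either not at all or transversally in the interior, and the resulting point $v$ is a vertex of $\Delta_F$. Fix such a $v$, let $w$ be either endpoint of $e$ in $\Delta$, and note that the $n-1$ two-dimensional faces of $\Delta$ containing $e$ slice along $F$ to give $n-1$ edges of $\Delta_F$ at $v$, in bijection with the edges of $\Delta$ at $w$ other than $e$.

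By the Caution at the start of Section~\ref{section construction} I may translate and apply $\mathrm{SL}(n,\mathbb{Z})$, so I place $w$ at the origin, take $F = \{x_n = c\}$ with $c > 0$, and use the orthogonality of $e$ with $F$ to align the primitive integer direction of $e$ with $\mathbf{e}_n$. Let $\mathbf{u}_1, \dots, \mathbf{u}_{n-1}$ be the primitive integer edge vectors at $w$ along the remaining $n-1$ edges. The Delzant condition for $\Delta$ at $w$ is the statement that $\{\mathbf{e}_n, \mathbf{u}_1, \dots, \mathbf{u}_{n-1}\}$ is a $\mathbb{Z}$-basis of $\mathbb{Z}^n$; expanding the corresponding determinant along the column $\mathbf{e}_n$, this is equivalent to $\{\pi(\mathbf{u}_1), \dots, \pi(\mathbf{u}_{n-1})\}$ being a $\mathbb{Z}$-basis of $\mathbb{Z}^{n-1}$, where $\pi\colon \mathbb{Z}^n \to \mathbb{Z}^{n-1}$ is projection onto the first $n-1$ coordinates, and $\mathbb{Z}^{n-1}$ is identified with the lattice of $F$.

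It remains to compute the primitive lattice direction of each edge of $\Delta_F$ at $v$. The $i$-th such edge is $G_i \cap F$, where $G_i$ is the 2-face of $\Delta$ spanned at $w$ by $\mathbf{e}_n$ and $\mathbf{u}_i$. Parametrizing $G_i$ near $w$ as $s\mathbf{e}_n + t\mathbf{u}_i$ with $s,t \geq 0$, the equation $x_n = c$ becomes $s = c - t(\mathbf{u}_i)_n$, and differentiating yields the tangent vector $\mathbf{u}_i - (\mathbf{u}_i)_n \mathbf{e}_n$ at $v = c\mathbf{e}_n$; under the identification $F \cong \mathbb{R}^{n-1}$ this is exactly $\pi(\mathbf{u}_i)$. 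Combined with the previous paragraph, the primitive edge vectors of $\Delta_F$ at $v$ form a $\mathbb{Z}$-basis of the ambient lattice of $F$. Since $v$ was arbitrary, $\Delta_F$ is Delzant.

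The main obstacle is bookkeeping rather than substance: one must carefully verify that the slice $\Delta_F$ has the expected combinatorial face lattice, so that vertices of $\Delta_F$ correspond to edges of $\Delta$ crossing $F$ and edges of $\Delta_F$ correspond to 2-faces of $\Delta$ crossing $F$. The parallel hypothesis handles this uniformly by ruling out any vertex of $\Delta$ lying on $F$, after which the bijections above are immediate and the linear-algebraic identity obtained from cofactor expansion concludes.
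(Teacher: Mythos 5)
Your argument follows the same route as the paper's: for a vertex $\bar v$ of $\Delta_F$, pass to the edge $l$ of $\Delta$ through $\bar v$, pick a vertex $w$ of $\Delta$ on $l$, and project the Delzant lattice basis at $w$ into $F$. You supply the details the paper elides, namely that $F$ misses the vertices of $\Delta$ (so that vertices and edges of $\Delta_F$ correspond to edges and $2$-faces of $\Delta$ crossing $F$), the explicit tangent computation along each $G_i\cap F$, and the cofactor-expansion reason why the projected vectors form a $\mathbb{Z}^{n-1}$-basis. One caution, shared with the paper's proof: you normalize by an $\mathrm{SL}(n,\mathbb{Z})$ change of coordinates so that $e$ points along $\mathbf e_n$ \emph{and} $F=\{x_n=c\}$, but $\mathrm{SL}(n,\mathbb{Z})$ does not preserve Euclidean orthogonality, so one cannot in general arrange both at once starting from the literal ``$e\perp F$'' in the definition of a parallel polytope. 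What the cofactor step really needs is a lattice-compatible version of orthogonality: the primitive edge direction $\mathbf d$ together with the lattice $\mathbb{Z}^n\cap V_F$ of the linear part $V_F$ of $F$ should span $\mathbb{Z}^n$ (equivalently, $\mathbf d$ pairs to $\pm1$ with the primitive conormal of $F$). With the purely Euclidean reading, the projection of the $\mathbf u_i$ to $V_F$ need not even be a lattice vector (e.g.\ $\mathbf d=(1,1)$, $\mathbf u_1=(1,0)$ gives projection $(1/2,-1/2)$), and the index of the resulting sublattice can be greater than $1$. This ambiguity already lives in the paper's phrase ``project to $F$,'' so it is a gap in the source and not something you introduced, but your more explicit write-up makes it visible and is the one step that deserves a sentence clarifying which reading of ``orthogonal'' is being used.
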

\begin{proof}
Let $\overline{v}$ be a vertex in $\Delta_F$. Assume $\Delta$ is of
dimension $n$. What we need to prove is that, there exists a basis
$\overline{k}_1,\ldots,\overline{k}_{n-1}$ of $\mathbb{Z}^{n-1}$
such that each edge starting from $\overline{v}$, can be written as
$\overline{v}+t\overline{k}_i$. In fact, for any vertex
$\overline{v}$ by definition there must exist an edge $l$ of
$\Delta$ which intersects $\Delta$ at $\overline{v}$. We consider a
vertex of $\Delta$ which is located at $l$. Then there exists basis
of $\mathbb{Z}^{n}$ $k_1,\ldots,k_{n}$. One of them must be along
$l$. Now project to $F$, we get the desired $\overline{k}_i$.
\end{proof}

\begin{theorem}
Given a parallel Delzant polytope $(\Delta,F)$ and the associated
toric manifold $X_{\Delta}$. Assume its moment map is
$\mu_{\Delta}$. Then for any parallel hyperplane $F$ which
intersects $\Delta^{\circ}$, $\mu^{-1}(F\cap\Delta)$ will be a
hypersurface of $X_{\Delta}$. Moreover, this is isomorphic to
$X_{\Delta_F}\times\mathbb{S}^1$.
\end{theorem}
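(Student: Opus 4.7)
The plan is to set up coordinates, show $c$ is a regular value of the normal component of the moment map, perform symplectic reduction, and then trivialize the resulting principal circle bundle over $X_{\Delta_F}$. Using the Caution and an $\mathrm{SL}(n,\mathbb{Z})$ change of basis I first reduce to the case $F=\{x_n=0\}$ with $e_n$ normal to $F$ and $c=0$, so that the last component $\mu_n$ of $\mu_\Delta$ generates the action of the last circle factor $\mathbb{S}^1_n\subset\mathbb{T}^n$. The parallel hypothesis becomes: every edge of $\Delta$ meeting $F$ is along $\pm e_n$. A Delzant vertex has $n$ linearly independent edge directions, so no vertex can lie on $F$ (for $n\geq 2$; the case $n=1$ is immediate). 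Any positive-dimensional face of $\Delta$ that meets $F$ then has vertices strictly on both sides of $F$, so it must contain an edge of $\Delta$ crossing $F$, i.e.\ an edge along $\pm e_n$. Consequently every facet meeting $F$ is ``vertical''---it contains the $e_n$-direction---and its primitive inward normal lies in $\{e_n\}^\perp$. Combinatorially, for any $p\in F\cap\Delta$ the stabilizer of a point in $\mu^{-1}(p)$ is generated by such normals, so $\mathbb{S}^1_n$ is never in any stabilizer. This yields two conclusions at once: $\mathbb{S}^1_n$ acts freely on $\mu^{-1}(F\cap\Delta)=\mu_n^{-1}(0)$, and $0$ is a regular value of $\mu_n$ (its critical values are the images of $\mathrm{Fix}(\mathbb{S}^1_n)$, which do not meet $F$). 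Thus $\mu_n^{-1}(0)$ is a smooth hypersurface of $X_\Delta$.

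Marsden--Weinstein--Meyer reduction then produces the $(2n-2)$-dimensional symplectic quotient $\mu_n^{-1}(0)/\mathbb{S}^1_n$, equipped with an induced effective Hamiltonian $\mathbb{T}^{n-1}$-action and moment polytope $\Delta_F$; since $\Delta_F$ is Delzant by the preceding lemma, Delzant's classification identifies the quotient with $X_{\Delta_F}$. What remains is to trivialize the resulting principal $\mathbb{S}^1_n$-bundle $\pi\colon\mu_n^{-1}(0)\to X_{\Delta_F}$. The key combinatorial input is that, by parallelism, the slab $\Delta\cap\{|x_n|<\epsilon\}$ is isomorphic as a Delzant region to the product $\Delta_F\times(-\epsilon,\epsilon)$ for small $\epsilon>0$. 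I would upgrade this to a $\mathbb{T}^n$-equivariant symplectomorphism $\mu^{-1}(\Delta\cap\{|x_n|<\epsilon\})\cong X_{\Delta_F}\times\mathbb{S}^1\times(-\epsilon,\epsilon)$ that intertwines $\mu_n$ with projection to $(-\epsilon,\epsilon)$; restricting to level zero then gives exactly $\mu_n^{-1}(0)\cong X_{\Delta_F}\times\mathbb{S}^1$.

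The main obstacle is producing this semi-global product model, since Delzant's theorem is usually phrased for compact polytopes. Two routes I would consider: (i) invoke Duistermaat--Heckman---because the reduced polytopes $\Delta\cap\{x_n=t\}$ all equal $\Delta_F$ for $t$ near $0$, the reduced symplectic class is constant in $t$, so its Duistermaat--Heckman derivative $c_1(\pi)\in H^2(X_{\Delta_F};\mathbb{Z})$ vanishes and $\pi$ is trivial; or (ii) patch $\mathbb{T}^n$-equivariant Darboux charts along the boundary strata (all vertical near $F$) via the equivariant symplectic neighborhood theorem, glued to action--angle coordinates on $\mu^{-1}(F\cap\Delta^\circ)$ using a $\mathbb{T}^{n-1}$-invariant partition of unity. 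Either route yields the desired equivariant trivialization and completes the identification $\mu^{-1}(F\cap\Delta)\cong X_{\Delta_F}\times\mathbb{S}^1$.
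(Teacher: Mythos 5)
Your approach is genuinely different from the paper's. The paper works entirely inside the explicit Delzant quotient construction $X_\Delta = \{z \in \mathbb{C}^d : \iota^* J(z) = 0\}/N$: it shows the coordinates $z_{s+j}$ dual to the non-parallel facets are nonzero on the level set, uses the $N$-action to normalize all but one of their phases, and exhibits the leftover phase as the $\mathbb{S}^1$ factor, identifying the rest with $X_{\Delta_F}$ directly from the defining equations. You instead work intrinsically on $X_\Delta$: you perform Marsden--Weinstein reduction at the circle $\mathbb{S}^1_n$, invoke Delzant's theorem to recognize the quotient as $X_{\Delta_F}$, and then reduce the whole problem to showing the principal $\mathbb{S}^1_n$-bundle over $X_{\Delta_F}$ is trivial. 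This is cleaner conceptually and separates the two real contents of the theorem (that the level set is smooth and a principal bundle; that the bundle is trivial), whereas the paper's computation gives both at once but is harder to read. The price you pay is that the triviality of the bundle is exactly where you leave a genuine gap: both of your ``routes'' are only sketched, and they need care. For route (i), Duistermaat--Heckman identifies nearby reduced spaces via an Ehresmann connection and says $[\omega_t] = [\omega_0] - t\,e(\pi)$ under \emph{that} identification, while your observation that the reduced polytopes $\Delta\cap\{x_n=t\}$ are constant in $t$ gives constancy of $[\omega_t]$ under the \emph{toric} identification; to conclude $e(\pi)=0$ you must match these two identifications, which can be done because both are $\mathbb{T}^{n-1}$-equivariant and the equivariant diffeomorphism type of a toric manifold is rigid, but this needs to be said. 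Route (ii) as stated is essentially a restatement of what the paper proves; the paper's explicit coordinates are one way to carry it out. One further small point worth making explicit: your reduction to $F=\{x_n=0\}$ tacitly uses that the normal direction to a parallel hyperplane is a lattice vector, which follows because some edge of the Delzant polytope must cross $F$ and the parallel hypothesis forces that edge (a lattice direction) to be the normal.
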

\begin{proof}
We can prove this through the original construction from Delzant
polytope to get toric variety. Recall that the Delzant polytope is
defined by $\langle x, u_i\rangle\geq \lambda_i$, for
$i=1,\cdots,d$, where $u_i$ is primitive element in the lattice
$\mathbb{Z}^n$, due to the fact that $\Delta$ is Delzant. Then we
have an exact sequence,
\begin{align*}
0\longrightarrow\mathfrak{n}\stackrel{\iota}{\longrightarrow}\mathbb{R}^d\stackrel{\beta}{\longrightarrow}\mathbb{R}^n\longrightarrow0.
\end{align*}
Where $\beta$ is the map $e_i\mapsto u_i$, $\mathfrak{n}$ is the
kernel of $\beta$ and $\iota$ is the inclusion. Dualize it will
yield a sequence,
$$0\longrightarrow(\mathbb{R}^n)^*\stackrel{\beta^*}{\longrightarrow}(\mathbb{R}^d)^*\stackrel{\iota^*}{\longrightarrow} \mathfrak{n}^*\longrightarrow0.$$
Then one can consider the usual action of $\mathbb{T}^d$ on
$\mathbb{C}^d$ by rotation with moment map
$J(z)=\frac{1}{2}(|z_1|^2,\ldots,|z_d|^2)+(\lambda_1,\ldots,\lambda_d)$,
and doing symplectic reduction with respect to $0$ level and the $N$
action, where $N$ is the Lie group generated by $\mathfrak{n}$. That
is to say, we quotient $\{z\in\mathbb{C}^d|\iota^*\circ J(z)=0\}$ by
$N$. And the corresponding toric manifold is
$X_\Delta=\{z\in\mathbb{C}^d|\iota^*\circ J(z)=0\}/N$. See, for
example, \cite{guillemin}.

However, $\iota^*\circ J(z)=0$ if and only if $J(z)=\beta^*(J_N(z))$
for some unique $J_N(z)\in(\mathbb{R}^n)^*$ by the exact sequence.
In fact, $[z]\in\{z\in\mathbb{C}^d|\iota^*\circ J(z)=0\}/N$,
$[z]\mapsto J_N(z)$ is exactly the moment map of
$X_\Delta\simeq\{z\in\mathbb{C}^d|\iota^*\circ J(z)=0\}/N$ (Note
that this map is well-defined). Taking inner product, we will have
\begin{align*}
\langle J(z),e_i\rangle&=\langle\beta^*(J_N(z)),e_i\rangle\\
&=\langle J_N(z),u_i\rangle\\
&=\frac{1}{2}|z_i|^2+\lambda_i.
\end{align*}
That is to say, the distance of the moment map image $J_N([z])$ of
$[z]\in X_\Delta$ to each codimension 1 facet of $\Delta$ will be
$$\langle J_N(z),u_i\rangle-\lambda_i=\frac12|z_i|^2.$$

In the parallel Delzant case, we use the following notation. We use
$v_i$, $i=1,\ldots,s$ to denote the vectors in $\{u_i\}$ that are
parallel to the hyperplane $F$, and we use $r_j$, $j=1,\ldots,t$ to
denote those who are not parallel to $F$. Arrange $\{v_i\}$ and
$\{r_j\}$ such that $v_1,\ldots,v_s,r_1,\ldots,r_t$ is the same as
$u_1,\dots,u_d$. Now, by our hypothesis, $F\cap\Delta$ will have
positive distance to each codimension 1 facet of $\Delta$
corresponding to $r_j$. Hence for those $z\in J_N^{-1}(F)$,
$|z_{s+j}|\neq0$.

Let $t^\perp\in\mathbb{R}^n=\rm{Lie}\,\mathbb{T}^n$ be a unit vector
such that ker$\,t^\perp$ in $\mathfrak{t}^*$ is parallel to $F$.

\begin{claim}
$\mathbb{S}^1=\{\exp kt^\perp|k\in\mathbb{R}\}$ acts freely on
$J_N^{-1}(F)$
\end{claim}
This is simple. Because as a set,
$J_N^{-1}(F\cap\Delta^{\circ})=F\cap\Delta^{\circ}\times\mathbb{T}^n$,
and the action on it must be free. At the point $x\in J_N^{-1}(F\cap
P)$ for some facet $P$ of $\Delta$, the isotropy group will be
$\mathbb{T}^P\subset\mathbb{T}^n$, where $\mathbb{T}^P=\exp V$ and
$V\subset\rm{Lie}\,(\mathbb{T}^n)$ is the kernel of the linear space
corresponding to the affine space $P$. It is obvious that
$t^\perp\not\in V$.

Without loss of generality, we can do a $\rm{SL}(n,\mathbb{Z})$
transformation to make $t^\perp$ become $e_d$. This is because
$t^\perp$ is a vector along some edges of $\Delta$.

Now for any vector $e_{s+j}\in\mathbb{R}^d$ with $s+j<d$, by the
hypothesis, we can find $a_{s+j,i}$, $i=0,\ldots,s$ such that
$e_{s+j}-a_{s+j,0}t^\perp-a_{s+j,1}e_1-\ldots-a_{s+j,s}e_s\in
\mathfrak{n}$.
\begin{claim}
Because of the assumption we have $v_1,\ldots,v_s$ are all parallel
to $F$, the map $\beta$ can be factored to get

\begin{equation*}
\begin{tikzcd}
{\mathbb{R}^d\subset\text{span}\{v_1,\ldots,v_n\}} \arrow[rr, "\overline{\beta}"] \arrow[rrdd, "\beta"] &  & \mathbb{R}^{n-1} \arrow[dd, "\psi"] \\
                                                                                                        &  &                                     \\
                                                                                                        &  & \mathbb{R}^n                       
\end{tikzcd}
\end{equation*}

Then $\rm{ker}\,\overline{\beta}\subset\mathfrak{n}$ and
$\rm{ker}\,\overline{\beta}\oplus\rm{span}\,\{
e_{s+j}-a_{s+j,0}t^\perp-a_{s+j,1}e_1-\ldots-a_{s+j,s}e_s\}$ equals
to $\mathfrak{n}$.
\end{claim}
This claim is trivial, becasue given any element in $\mathfrak{n}$
we can use $\rm{span}\,\{
e_{s+j}-a_{s+j,0}t^\perp-a_{s+j,1}e_1-\ldots-a_{s+j,s}e_s\}$ to
annihilate its $e_{s+j}$ part, and the rest just consists of
elements in $\rm{ker}\,\overline{\beta}$.

Hence, by acting $\exp
t(e_{s+j}-a_{s+j,0}t^\perp-a_{s+j,1}e_1-\ldots-a_{s+j,s}e_s)\in N$
we can make, $z_{s+j}=|z_{s+j}|$, for $z\in J_N^{-1}(F)$ and
$s+j<d$.That is to say, we can forget about $e_{s+j}$ for $s+j<d$.
The last component $z_d$ is free because the $t^\perp$ action is
free and $z_d\neq0$. Thus to consider $J_N^{-1}(F)/N$, assuming
$\pi_1:\mathbb{C}^d\rightarrow\mathbb{C}^s$ is the projection to the
first $s$ components, it suffices to consider $\pi_1(J_N^{-1}(F))$
quotient by $\exp\rm{span}\{v_1,\ldots,v_s\}$. And
$J_N^{-1}(F)/N=(\pi_1(J_N^{-1}(F))/(\exp\rm{span}\{v_1,\ldots,v_s\}))\times\mathbb{S}^1$.

\begin{claim}
$\pi_1(J_N^{-1}(F))/(\exp\rm{span}\{v_1,\ldots,v_s\})$ is exactly
$X_{\Delta_F}$.
\end{claim}
After a similar argument in Claim 26, we can conclude that
$\exp\rm{span}\{v_1,\ldots,v_s\}$ preserves $\pi_1(J_N^{-1}(F))$,
and $(z_1,\ldots,z_s)\in\pi_1(J_N^{-1}(F))$ satisfies the condition
$\iota^*\circ(\frac{1}{2}(|z_1|^2,\ldots,|z_s|^2)+(\lambda_1,\ldots,\lambda_s))=0$,
from the construction we introduced above we see this is exactly
$X_{\Delta_F}$.
\end{proof}
\begin{remark}
There is a similar version of this theorem in the $b$-symplectic
case, see \cite{btoric}.
\end{remark}
\begin{remark}
Moreover, we can see easily that in this local model $X_{\Delta_F}\times\mathbb{S}^1$, each codimension $2$ submanifolds $X_{\Delta_F}\times\{\theta\}$ is a toric submanifold.
\end{remark}

\subsection{Local model for toric manifolds}

In this section we study more local property of a special kind of
toric variety $(X_{\Delta},\Delta)$. $\Delta$ will be more special
and such toric varieties will be our fundamental building blocks for
our construction.

\begin{definition}
We say a parallel Delzant polytope $(\Delta,F)$ is \textbf{strict
parallel}, if there exists a parallel hyperplane $F'$ for
$(\Delta,F)$ such that $F'\cap\Delta=F'\cap\partial\Delta$. $F'$ is
called the \textbf{strict parallel hyperplane}.
\end{definition}

For example, Figure \ref{fig:parallel} is not strict parallel with
respect to the hyperplane $F$. However, the following picture gives
us an example of strict parallel Delzant polytope.

\begin{figure}[ht]
\includegraphics[width=4cm,height=4cm]{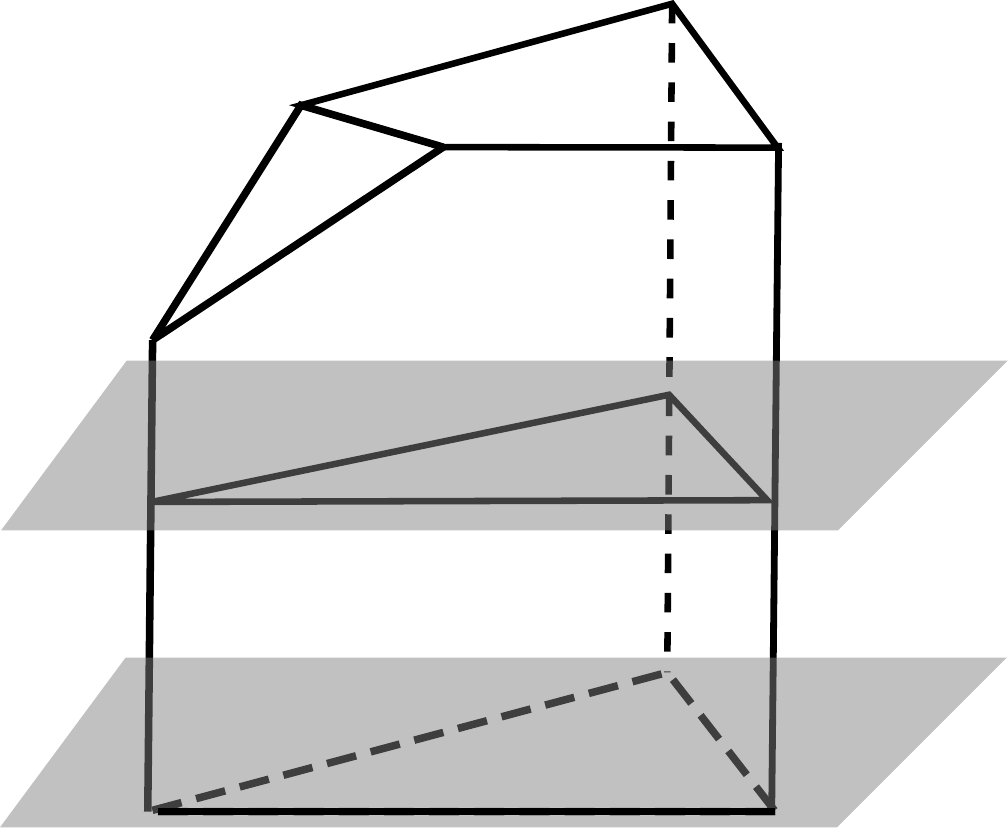}
\caption{Example of strict parallel Delzant polytope.}
\label{fig:strictparallel}
\end{figure}

We will study the local behavior of a strict parallel toric variety
$(\Delta,F)$ around the facet $F\cap\Delta$, where $F$ is a strict
parallel hyperplane. In the next subsection we will cut
$F\cap\Delta$ and then glue two toric manifold along it.

From now on, by a $\rm{SL}(n,\mathbb{Z})$ transformation, we can
always assume the strict parallel hyperplane is parallel to the
$t_1^*,\ldots,t_{n-1}^*$ plane in
$(\mathbb{R}^n)^*=(\rm{Lie}\,(\mathbb{T}^n))^*$. Moreover, we can
assume the entire Delzant polytope is above $F$, because the case
where the polytope is under $F$ is exactly the same. We also assume
the dimension of $\Delta$ is $n$ and the dimension of $\Delta_F$ is
$n-1$.

\begin{theorem}
Given a strict parallel toric manifold $(X_{\Delta},F)$, F a strict
parallel hyperplane, assume it has moment map $\mu_{\Delta}$. For a
suitable small neighborhood $U$ of $\Delta_F=\Delta\cap F$ in
$\Delta$, $\mu_\Delta^{-1}(U)$ will be symplectomorphic to
$X_{\Delta_F}\times\mathbb{S}^2{[-r,-r+\epsilon)}$. Here
$\mathbb{S}^2$ is a sphere with suitable radius $r$ and the common
symplectic form $dh\wedge d\theta$.
\end{theorem}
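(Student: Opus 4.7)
The plan is to adapt the argument from the previous theorem, which produced the $X_{\Delta_F}\times\mathbb{S}^1$ structure over an interior parallel hyperplane, to the case where $F$ is itself a boundary facet of $\Delta$. The essential change is that we no longer restrict $|z_d|$ to a fixed positive constant but allow it to vary in a small disk about the origin; this disk will become the sphere cap $\mathbb{S}^2[-r,-r+\epsilon)$.

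First I would redo the Delzant setup. After an $\mathrm{SL}(n,\mathbb{Z})$ change of basis I may assume $t^\perp = e_n$ and that the inward primitive normal to $\Delta_F$ is $u_d = e_n$, so the facet $\Delta_F$ corresponds exactly to the locus $z_d = 0$ inside $\mathbb{C}^d$. Choose the neighborhood $U = \{x\in\Delta : \langle x,e_n\rangle - \lambda_d < \epsilon\}$, so that $\mu_\Delta^{-1}(U) = \{[z]\in X_\Delta : \tfrac{1}{2}|z_d|^2 < \epsilon\}$ by the computation of distance to a facet carried out in the previous theorem. The strict parallel hypothesis ensures that the vectors $v_1,\ldots,v_s$ corresponding to facets meeting $\Delta_F$ are perpendicular to $e_n$, so the factorization $\beta = \psi\circ\overline{\beta}$ from the previous proof still applies verbatim.

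Next I would run the same $N$-action normalization as in the previous proof. For each $s<s+j<d$, use the generator $e_{s+j}-a_{s+j,0}t^\perp-a_{s+j,1}e_1-\ldots-a_{s+j,s}e_s\in\mathfrak{n}$ to rotate $z_{s+j}$ to a positive real number, eliminating it. What remains is $(z_1,\ldots,z_s)$, subject to the moment level condition and modulo $\exp\mathrm{span}\{v_1,\ldots,v_s\}$, together with the unconstrained complex coordinate $z_d\in D_\epsilon := \{|z_d|^2 < 2\epsilon\}$. By the final claim in the previous proof, the first factor is precisely $X_{\Delta_F}$. The standard symplectic form $\sum dx_k\wedge dy_k$ on $\mathbb{C}^d$ decouples on this slice into the first $s$ coordinates and the last one, and reduction by $N$ respects this splitting because the only part of $N$ that would act nontrivially on $z_d$ has been absorbed into the free $\mathbb{S}^1=\exp(\mathbb{R}\,t^\perp)$ direction that survives in the quotient.

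Finally I would identify $D_\epsilon$ with the cap $\mathbb{S}^2[-r,-r+\epsilon)$ via the change of coordinates $z_d = \sqrt{2(h+r)}\,e^{i\theta}$, which sends $dx_d\wedge dy_d = |z_d|\,d|z_d|\wedge d\theta$ to $dh\wedge d\theta$, with $r$ chosen larger than $\epsilon$ so that the cap genuinely embeds in a round sphere. The main obstacle I anticipate is verifying rigorously that the reduced symplectic form splits as a genuine product and picks up no mixing between the $X_{\Delta_F}$ factor and the disk factor. Concretely, one has to check that the $N$-normalization selects a symplectic slice of the level set whose decomposition as $X_{\Delta_F}\times D_\epsilon$ is preserved by the induced symplectic form, so that the symplectic reduction commutes with the product decomposition. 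Once this is done, the symplectomorphism with $X_{\Delta_F}\times\mathbb{S}^2[-r,-r+\epsilon)$ follows immediately.
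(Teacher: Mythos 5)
Your argument takes a genuinely different route from the paper. The paper constructs $X_\Delta$ by starting from $X_{\Delta_F}\times\mathbb{S}^2$ (whose polytope is $\Delta_F\times[-r,r]$) and performing symplectic cuts at the top to obtain $\Delta$; since the cut is an honest symplectomorphism away from the cut locus, the bottom strip $\Delta_F\times[-r,-r+\epsilon)$ keeps the product form of $X_{\Delta_F}\times\mathbb{S}^2$, and the lifting computation in the paper just confirms this. You instead work directly inside the Delzant reduction $\{z\in\mathbb{C}^d:\iota^*J(z)=0\}/N$ and try to exhibit a product slice. That is a reasonable plan, but the step you yourself flag --- that the reduced form actually splits with no mixing between the $X_{\Delta_F}$ factor and the disk --- is a real gap, and your sketch does not close it.

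Here is the observation that closes the gap and that is absent from your argument. Take the slice $S=\{z\in J^{-1}(0): z_{s+j}\in\mathbb{R}_{>0}\text{ for all }s<s+j<d\}$ (valid on $\mu_\Delta^{-1}(U)$ since $|z_{s+j}|>0$ there). Along $S$, every tangent vector has $dy_{s+j}=0$, so $\iota_S^*\bigl(\sum_k dx_k\wedge dy_k\bigr)=\sum_{k\le s}dx_k\wedge dy_k+dx_d\wedge dy_d$: the middle block simply drops out. This form manifestly has no cross-terms between $(z_1,\ldots,z_s)$ and $z_d$; the first block descends through the residual $\exp(\ker\overline{\beta})$-quotient to $\omega_{\Delta_F}$ (one should also note $\ker\overline{\beta}$ coincides with the $N$-group for $\Delta_F$, and that the $\lambda_i$, $v_i$ attached to the facets meeting $\Delta_F$ are exactly the Delzant data of $\Delta_F$ after projecting to $F$), and the second block becomes $dh\wedge d\theta$ under $h=\tfrac12|z_d|^2-r$. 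Without this pointwise computation you only get a smooth, not a symplectic, product. Separately, the phrase ``the only part of $N$ that would act nontrivially on $z_d$ has been absorbed into the free $\mathbb{S}^1=\exp(\mathbb{R}\,t^\perp)$'' is confused: $t^\perp$ lives in $\mathfrak{t}=\mathbb{R}^n$, the Lie algebra of the residual torus, not in $\mathfrak{n}\subset\mathbb{R}^d$. The actual reason $z_d$ survives as a free complex coordinate is the dimension count: $N$ is exhausted by the $(t-1)$ rotations used to make $z_{s+j}$ real together with the $(s-n+1)$-dimensional $\ker\overline{\beta}$, and $\ker\overline{\beta}$ acts only on $z_1,\ldots,z_s$.
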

The following picture illustrates the essence of this theorem.
\begin{figure}[ht]
\includegraphics[width=7.5cm,height=4cm]{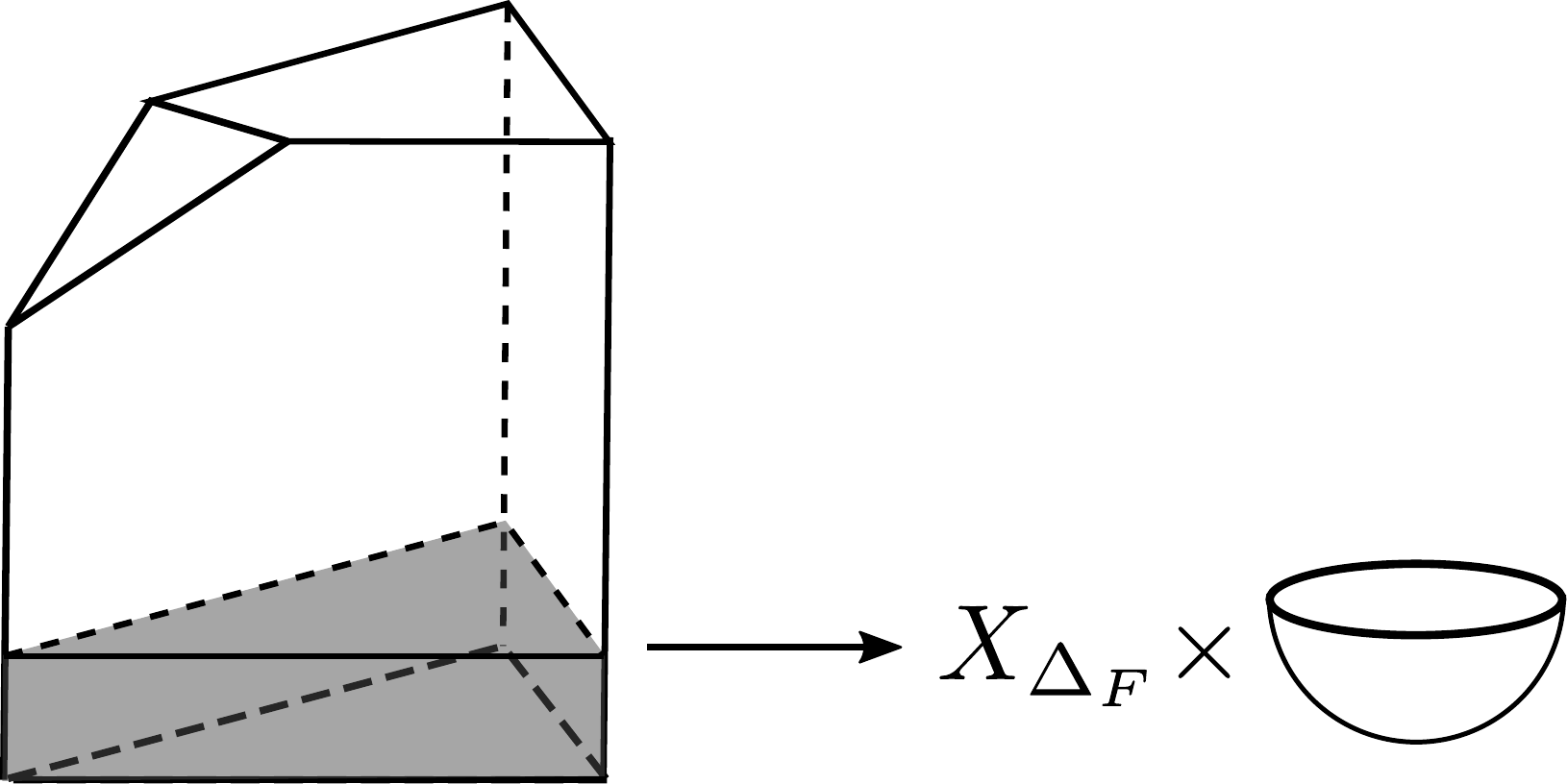}
\caption{Local model for neighborhood of strict hyperplane.}
\label{fig:localmodel}
\end{figure}

In fact, we can prove that there is a diffeomorphism very easily.
The crucial point is to prove this is a symplectomorphism. We will
apply symplectic cutting method. See \cite{lerman}.
\begin{proof}
Given a strict parallel Delzant polytope, there is a very simple way
to construct the desired toric manifold $X_\Delta$. We consider
$X_{\Delta_F}\times\mathbb{S}^2$, where $\mathbb{S}^2$ is a sphere
with suitable large radius, and is equipped with the common
symplectic form $dh\wedge d\theta$. Now the Delzant polytope
corresponding Delzant polytope of $X_{\Delta_F}\times\mathbb{S}^2$
will be $\Delta_F\times[-r,r]$. Because of the assumption that
$\Delta$ is strict parallel, up to adding or subtracting some
element in $(\mathbb{R})^*$, we can assume the strict parallel
hyperplane $F=\{t^*_n=-r\}$, and then one can get the polytope
$\Delta$ by cutting $\Delta_F\times[-r,r]$ at the top. As shown in
the following picture.

\begin{figure}[ht]
\includegraphics[width=8.3cm,height=3.8cm]{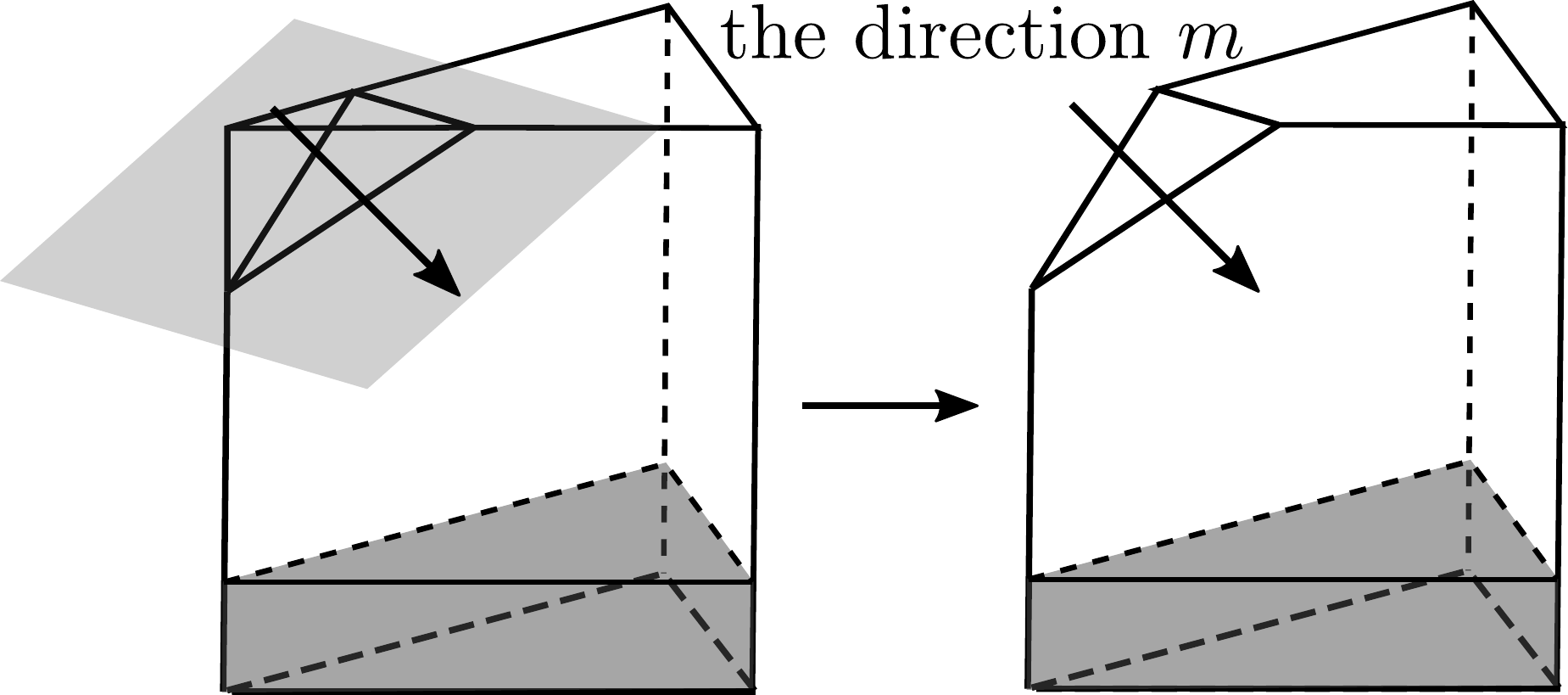}
\caption{Symplectic cutting.}
\label{fig:cut}
\end{figure}

We select positive $\epsilon$ small enough such that
$\Delta_F\times[-r,-r+\epsilon)$ is not cut under our cutting
process.

For the sake of simplicity, we can assume that we only cut
$\Delta_F\times[-r,r]$ once. The general case can be got by
induction.

Now, assume the cut is done with the
$m\in\mathbb{R}^n=\rm{Lie}\,(\mathbb{T}^n)$ direction and at level
$\delta$ (such a direction will be rational because the polytope we
get is Delzant). That is to say, we consider the
$\mathbb{S}^1=\{\exp (tm)|t\in\mathbb{R}\}$ action on
$X_{\Delta_F}\times\mathbb{S}^2\times\mathbb{C}$ by
$$\exp(tm):(x,z)\mapsto(\exp(tm)x,\exp(-it)z)\text{, for }x\in X_{\Delta_F}\times\mathbb{S}^2\text{ and }z\in\mathbb{C}.$$
The moment map for this action will be
$$\langle\mu,m\rangle-\frac{1}{2}|z|^2-\delta.$$
Here we write $\mu$ for the moment map of
$X_{\Delta_F}\times\mathbb{S}^2$. Take symplectic reduction with
respect to this action we will get the desired symplectic cut. And
in this case what we get is $X_{\Delta}$. In fact, the corresponding
symplectic reduction is just
$\{\langle\mu,m\rangle=\frac{1}{2}|z|^2+\delta\}/\mathbb{S}^1$.
Hence we are actually keep the part $\langle\mu,m\rangle\geq\delta$
in $X_{\Delta_F}\times\mathbb{S}^2$ and then compatify the boundary
to get the desired toric manifold.

Now we will consider this symplectic reduction process more
specifically. We denote the moment map for the $\mathbb{S}^1$ action
on $X_{\Delta_F}\times\mathbb{S}^2\times\mathbb{C}$ by
$J(x,z)=\langle\mu(x),m\rangle-\frac12|z|^2-\delta$, the quotient
map by $\pi:J^{-1}(0)\rightarrow X_\Delta$ and the inclusion map by
$\iota:J^{-1}(0)\rightarrow
X_{\Delta_F}\times\mathbb{S}^2\times\mathbb{C}$. We use $\omega$ for
the symplectic form on $X_{\Delta_F}\times\mathbb{S}^2$ and $\mu$
for its moment map. Then the symplectic form $\omega_{X_\Delta}$ on
$X_\Delta$ will satisfy
$$\pi^*\omega_{X_\Delta}=\iota^*(\omega+dx\wedge dy),$$
and the moment maps are related by
$$\mu_{\Delta}\circ\pi=\overline{\mu}\circ\iota.$$
Here by an abuse of notation, we use $\overline{\mu}$ to denote the
moment map of the $\mathbb{T}^n$ action on
$X_{\Delta_F}\times\mathbb{S}^2\times\mathbb{C}$ by acting on the
first two components.

\begin{claim}
The symplectic form in the neighborhood
$\mu^{-1}_\Delta(\Delta_F\times[-r,-r+\epsilon))$ of
$\mu^{-1}_\Delta(\Delta_F)$ is unchanged. More specifically, we have
a symplectomorphism
\begin{align*}
X_{\Delta_F}\times\mathbb{S}^2[-r,-r+\epsilon)&=\mu^{-1}(\Delta_F\times[-r,-r+\epsilon))\\
&\simeq\mu^{-1}_\Delta(\Delta_F\times[-r,-r+\epsilon)).
\end{align*}
\end{claim}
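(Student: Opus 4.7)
My plan is to exhibit an explicit symplectomorphism via the standard section of the symplectic cut. The key geometric input is that the cut is performed at the ``top'' of $\Delta_F \times [-r, r]$, so for $\epsilon$ small enough the slab $\Delta_F \times [-r, -r+\epsilon)$ lies strictly inside the open half-space $\{\langle \xi, m\rangle > \delta\}$ retained by the cut. In particular, the function $\mu^m - \delta$ is strictly positive on $\mu^{-1}(\Delta_F \times [-r, -r+\epsilon)) \subset X_{\Delta_F}\times\mathbb{S}^2$, which lets me take its square root and build a section.

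I would then define the candidate map via the standard cutting section. Set
\[ s : \mu^{-1}(\Delta_F \times [-r, -r+\epsilon)) \longrightarrow J^{-1}(0), \qquad s(x) = \bigl(x,\, \sqrt{2(\mu^m(x) - \delta)}\bigr), \]
and put $\phi = \pi \circ s$. Injectivity follows because two points $(x_1, \rho_1)$, $(x_2, \rho_2)$ with $\rho_i > 0$ real lie in the same $\mathbb{S}^1$-orbit only when $e^{-it} \in \mathbb{R}_{>0}$; the modulus-one constraint then forces $e^{-it}=1$, so $t$ acts trivially on both factors. Surjectivity of $\phi$ onto $\mu_\Delta^{-1}(\Delta_F \times [-r, -r+\epsilon))$ follows from the intertwining $\mu_\Delta \circ \pi = \overline{\mu} \circ \iota$ together with the fact that any $(x,z) \in J^{-1}(0)$ lying over the slab has $z \neq 0$ and can therefore be rotated into the image of $s$.

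To verify $\phi$ is a symplectomorphism I compute, using the reduction identity $\pi^*\omega_{X_\Delta} = \iota^*(\omega + dx\wedge dy)$,
\[ \phi^* \omega_{X_\Delta} \;=\; s^* \pi^* \omega_{X_\Delta} \;=\; s^* \iota^*(\omega + dx\wedge dy) \;=\; \omega + s^*(dx\wedge dy). \]
Writing $z = x + iy$, the section sends each point to $(\rho,0)\in\mathbb{R}^2\cong\mathbb{C}$, so $s^* dy = 0$ identically; equivalently, in polar coordinates $dx\wedge dy = \rho\, d\rho\wedge d\theta$ and $s^* d\theta = 0$ because $\theta \equiv 0$ on the section. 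Either way $s^*(dx\wedge dy)=0$, so $\phi^*\omega_{X_\Delta}=\omega$ as required.

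The main obstacle I anticipate is the image identification: verifying that $\phi$ maps \emph{onto} $\mu_\Delta^{-1}(\Delta_F \times [-r, -r+\epsilon))$ uses the intertwining $\mu_\Delta \circ \pi = \overline{\mu} \circ \iota$ together with the strict inequality $\mu^m > \delta$ on the slab, so that we stay inside the open set where the section is defined and the $\mathbb{S}^1$-action on the $\mathbb{C}$-factor is free. Once this is in place the computation of $\phi^*\omega_{X_\Delta}$ is essentially a one-line exercise, and the geometric/topological input about where the cut takes effect is the genuinely nontrivial part.
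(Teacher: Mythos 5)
Your proof is correct and follows essentially the same route as the paper: both arguments use the real-positive slice of $J^{-1}(0)$ over the slab (the paper phrases this as choosing lifts $(\gamma(t), z(t))$ with $z(t)$ real, you phrase it as the explicit section $s(x)=(x,\sqrt{2(\mu^m(x)-\delta)})$), and both conclude the symplectic form is preserved because the $dx\wedge dy$ term pulls back to zero on that slice. Your version is merely a cleaner, more explicit packaging of the same idea as a pullback computation rather than a pointwise check on lifted integral curves.
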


Indeed, the first equality in the above is trivial. For the
symplectomorphism, let the point $(x,z)\in J^{-1}(0)\subset
X_{\Delta_F}\times\mathbb{S}^2\times\mathbb{C}$, with
$x\in\mu^{-1}(\Delta_F\times[-r,-r+\epsilon))\subset
X_{\Delta_F}\times\mathbb{S}^2$. Because of the fact that $\mu(x)$
is away from the cutting hyperplane, we have
$\frac{1}{2}|z|^2=\langle\mu(x),m\rangle-\delta>0$. Hence we will
have

\begin{align*}
\mu^{-1}_\Delta(\Delta_F\times[-r,-r+\epsilon))&=\pi(\overline{\mu}^{-1}(\Delta_F\times[-r,-r+\epsilon))\cap
J^{-1}(0))\\
&=(\overline{\mu}^{-1}(\Delta_F\times[-r,-r+\epsilon))\cap
J^{-1}(0))/\mathbb{S}^1\\
&=\mu^{-1}(\Delta_F\times[-r,-r+\epsilon)).
\end{align*}
So we have a diffeomorphism.

Now we only need to examine the sympelctic form. Given two tangent
vectors $X$ and $Y$ of
$\mu^{-1}_\Delta(\Delta_F\times[-r,-r+\epsilon))$ at a point $x$, by
the symplectic cutting construction we will have $(x,z)\in
J^{-1}(0)\subset X_{\Delta_F}\times\mathbb{S}^2\times\mathbb{C}$. We
can select $z$ to be real and positive, because of the
$\mathbb{S}^1$ action. Now if $\gamma_X(t)$ is an integral curve in
$\mu^{-1}_\Delta(\Delta_F\times[-r,-r+\epsilon))$, then we have a
lift of the integral curve in $J^{-1}(0)$ $(\gamma_X(t),z_X(t))$,
with the first component $\gamma_X(t)$ and the second component
always real. Similarly we can do this for $Y$. Hence we have
\begin{align*}
\omega_\Delta(X,Y)&=\omega_\Delta\left(\left.\frac{d}{dt}\right|
_{t=0}(\gamma_X(t),z_X(t)),\left.\frac{d}{dt}\right|_{t=0}(\gamma_Y(t),z_Y(t))\right)\\
&=(\omega+dx\wedge
dy)\left(\left.\frac{d}{dt}\right|_{t=0}(\gamma_X(t),z_X(t)),\left.\frac{d}{dt}\right|_{t=0}(\gamma_Y(t),z_Y(t))\right)\\
&=\omega(X,Y)+dx\wedge
dy\left(\left.\frac{d}{dt}\right|_{t=0}z_X(t),\left.\frac{d}{dt}\right|_{t=0}z_Y(t)\right)\\
&=\omega(X,Y).
\end{align*}
The second term vanishes because $z_X(t)$ and $z_Y(t)$ are always
real. And the vector fields they generate are always along with
$\frac{\partial}{\partial x}$.

Till now we have already finished the proof. One just needs to
select the neighborhood $U$ as $\Delta_F\times[-r,-r+\epsilon)$. And
$\mu_\Delta^{-1}(\Delta_F\times[-r,-r+\epsilon))$ is
symplectomorphic to
$X_{\Delta_F}\times\mathbb{S}^2[-r,-r+\epsilon)$.
\end{proof}

The below corollary directly follows from the proof above.

\begin{corollary}
In the local model proved above, the $\mathbb{T}^n$ action splits to
$\mathbb{T}^{n-1}\times\mathbb{S}^1$ action, where
$\mathbb{T}^{n-1}$ action is the toric action on $X_{\Delta_F}$ and
$\mathbb{S}^1$ action is the usual action on a sphere, i.e.,
rotation along the $\theta$ parameter.
\end{corollary}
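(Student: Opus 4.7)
The plan is to track the $\mathbb{T}^n$ action through the symplectic cutting construction that identifies $\mu_\Delta^{-1}(\Delta_F\times[-r,-r+\epsilon))$ with $X_{\Delta_F}\times\mathbb{S}^2[-r,-r+\epsilon)$ in the preceding theorem, and then appeal to the obvious product structure on $X_{\Delta_F}\times\mathbb{S}^2$.

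First I would recall that on the ambient space $X_{\Delta_F}\times\mathbb{S}^2\times\mathbb{C}$ used in the cutting, the $\mathbb{T}^n$ action of interest acts only on the first two factors (trivially on $\mathbb{C}$) and commutes with the cutting $\mathbb{S}^1$ generated by $m$. Consequently it descends to the quotient and gives the toric $\mathbb{T}^n$ action on $X_\Delta$. Under the explicit symplectomorphism established in the preceding claim, the $\mathbb{T}^n$ action on $\mu_\Delta^{-1}(\Delta_F\times[-r,-r+\epsilon))$ therefore corresponds precisely to the restriction of the $\mathbb{T}^n$ action on $X_{\Delta_F}\times\mathbb{S}^2$ to the open strip $X_{\Delta_F}\times\mathbb{S}^2[-r,-r+\epsilon)$.

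Next I would observe that the $\mathbb{T}^n$ action on $X_{\Delta_F}\times\mathbb{S}^2$ is already a product action. Indeed, $X_{\Delta_F}\times\mathbb{S}^2$ is itself a toric manifold whose Delzant polytope is the rectangular product $\Delta_F\times[-r,r]$, and after the $\mathrm{SL}(n,\mathbb{Z})$ normalization in which the strict parallel hyperplane is $\{t_n^*=-r\}$, the first $n-1$ coordinates of $\mathfrak{t}^*$ generate the toric $\mathbb{T}^{n-1}$ action on $X_{\Delta_F}$ while the $n$-th coordinate generates the rotation $\mathbb{S}^1$ action on $\mathbb{S}^2$ with moment map the height function $h$. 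Restricting to $X_{\Delta_F}\times\mathbb{S}^2[-r,-r+\epsilon)$ then yields the claimed splitting.

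The only subtle point I expect to have to address is ensuring that the $\mathbb{T}^n$ action induced from symplectic reduction is genuinely this product action, rather than a twist coming from the cutting circle. This is forced by two facts already at hand: on the neighborhood under consideration we have $|z|^2=2(\langle\mu(x),m\rangle-\delta)>0$, so the cutting $\mathbb{S}^1$ acts freely and $\pi$ restricts to a diffeomorphism onto $\mu_\Delta^{-1}(\Delta_F\times[-r,-r+\epsilon))$; and the $\mathbb{T}^n$ was chosen to act trivially on the $\mathbb{C}$ factor. Hence the product action descends verbatim along this diffeomorphism, giving the $\mathbb{T}^{n-1}\times\mathbb{S}^1$ splitting stated in the corollary.
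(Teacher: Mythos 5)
Your proposal is correct and follows essentially the same route the paper intends: the paper's own justification is the terse remark that the corollary ``directly follows from the proof above,'' and the content of that remark is exactly what you spell out — the $\mathbb{T}^n$ action on $X_\Delta$ arises by descent from the product $\mathbb{T}^{n-1}\times\mathbb{S}^1$ action on $X_{\Delta_F}\times\mathbb{S}^2$ (trivially extended over the $\mathbb{C}$ factor and commuting with the cutting circle), so under the symplectomorphism of the local model it remains the product action on $X_{\Delta_F}\times\mathbb{S}^2[-r,-r+\epsilon)$. Your extra care about the freeness of the cutting $\mathbb{S}^1$ and the triviality of the $\mathbb{T}^n$ action on the $\mathbb{C}$ factor is a reasonable explicit unpacking of what the paper leaves implicit.
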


\subsection{Cutting and gluing}

Have already determined the local structure of our building blocks,
we can now start the cutting and gluing process.

\subsubsection{cutting}

Our building blocks consist of strict parallel Delzant polytopes.
Now given a strict parallel toric manifold $X_{\Delta}$, with
$\Delta$ strict parallel and $F$ the strict parallel hyperplane.
Still, without loss of generality we assume $F$ is parallel to the
$t^*_1,\ldots,t^*_{n-1}$ plane in $(\mathbb{R}^n)^*$.

If the Delzant polytope $\Delta$ is over $F$, we cut $X_\Delta$ at
the bottom by $X_{\Delta_F}$. That is to say, we consider the local
model proved in the above subsection,
$X_{\Delta_F}\times\mathbb{S}^2[-r,-r+\epsilon)$. And we cut
$X_{\Delta}$ by excluding
$$X_{\Delta_F}=X_{\Delta_F}\times\{-r\}\subset
X_{\Delta_F}\times\mathbb{S}^2[-r,-r+\epsilon)\subset X_{\Delta}.$$
This open symplectic manifold will be denoted by
$\leftidx{_{\rm{cut}}}X_{\Delta}$. Similarly one can cut $X_\Delta$
at the top, if there is a strict hyperplane $F$ such that $\Delta$
is under $F$. We denote this by $\leftidx{^{\rm{cut}}}X_{\Delta}$.
The following picture illustrates the cutting process for the first
case.

\begin{figure}[ht]
\includegraphics[width=12cm,height=3.5cm]{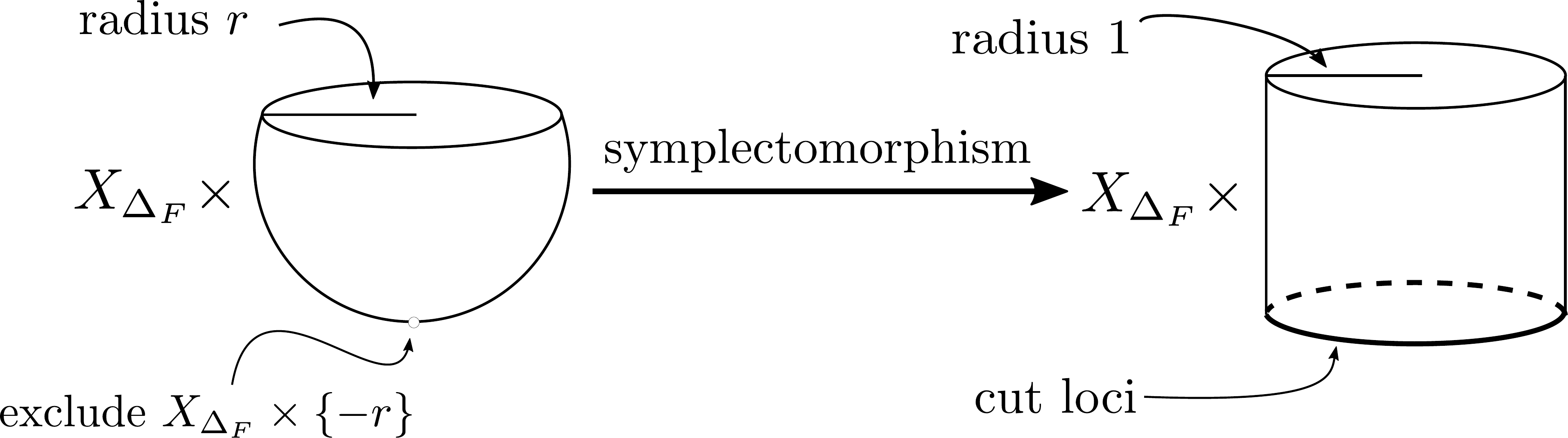}
\caption{Lower cutting.} \label{fig:cutting}
\end{figure}
Here the sphere and the cylinder are all equipped with symplectic
form $dh\wedge d\theta$.

\subsubsection{gluing}

In this part we glue two open symplectic manifold
$\leftidx{_{\rm{cut}}}{X}_{\Delta_1}$ and
$\leftidx{^{\rm{cut}}}{X}_{\Delta_2}$ together, where $\Delta_i$ are
strict parallel Delzant polytopes, and they have the same
$\Delta_F$.

By the local model above, we are just gluing two cylinders. There
are two ways to do it. The first one is gluing the cylinders with
the same orientation (recall as symplectic manifolds they all carry
an orientation). The other one is gluing with reversed orientation.
\begin{definition}
The orientation preserved gluing is defined to be
$$\leftidx{_{\rm{cut}}}{X}_{\Delta_1}\sqcup X_{\Delta_F}\times\mathbb{S}^1\sqcup\leftidx{^{\rm{cut}}}{X}_{\Delta_2},$$
with the obvious smooth structure, making the cylinders smoothly
pinched together with the same orientation. The orientation reserved
gluing is defined to be
$$\leftidx{_{\rm{cut}}}{X}_{\Delta_1}\sqcup X_{\Delta_F}\times\mathbb{S}^1\sqcup\leftidx{^{\rm{cut}}}{X}_{\Delta_2},$$
with the obvious smooth structure, making the cylinders smoothly
pinched together with the opposite orientation.
\end{definition}
The following picture show the two different kinds of gluing.

\begin{figure}[ht]
\includegraphics[width=12cm,height=6cm]{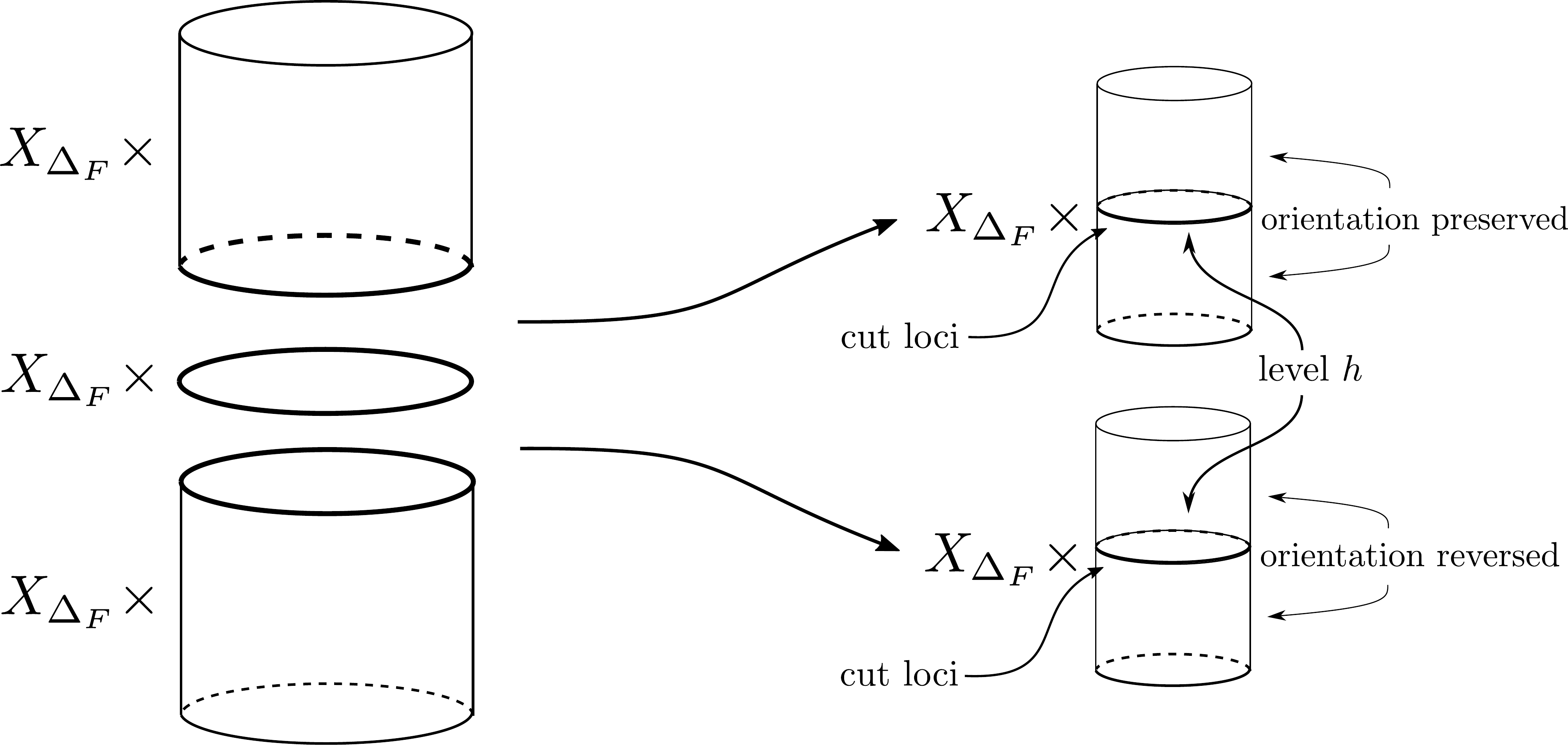}
\caption{Different way of gluing.} \label{fig:gluing}
\end{figure}


The same orientation glued manifold will be denoted as
$\leftidx{_{\rm{cut}}}{X}_{\Delta_1}\cup\leftidx{^{\rm{cut}}}{X}_{\Delta_2}$
and the orientation reversed gluing will be denoted by
$\leftidx{_{\rm{cut}}}{X}_{\Delta_1}\cup\overline{\leftidx{^{\rm{cut}}}{X}_{\Delta_2}}$.

\begin{theorem}
$\leftidx{_{\rm{cut}}}{X}_{\Delta_1}\cup\leftidx{^{\rm{cut}}}{X}_{\Delta_2}$
will carry a toric manifold structure. And
$\leftidx{_{\rm{cut}}}{X}_{\Delta_1}\cup\overline{\leftidx{^{\rm{cut}}}{X}_{\Delta_2}}$
will carry a $b$-toric manifold structure.
\end{theorem}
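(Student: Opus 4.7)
The plan is to exploit the local model theorem to transfer the gluing problem to a purely local question on a collar around $X_{\Delta_F}\times\mathbb{S}^1$. By that theorem, a neighborhood of $X_{\Delta_F}$ in $\leftidx{_{\rm{cut}}}{X}_{\Delta_1}$ is symplectomorphic to $X_{\Delta_F}\times\mathbb{S}^2[-r,-r+\epsilon)$ with form $\omega_{\Delta_F}+dh\wedge d\theta$, and similarly $\leftidx{^{\rm{cut}}}{X}_{\Delta_2}$ carries a collar $X_{\Delta_F}\times\mathbb{S}^2(r-\epsilon,r]$ with the analogous form. After removing the closed hemispheres that were cut away, the two pieces acquire matching open cylindrical ends $X_{\Delta_F}\times\bigl(X_{\Delta_F}\times\mathbb{S}^1\times(0,\epsilon)\bigr)$, and the inserted copy of $X_{\Delta_F}\times\mathbb{S}^1$ closes these ends; the smoothness of the obvious gluing follows from the explicit cylindrical form of the local model plus the product torus action given by the corollary.

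For the same-orientation gluing, I would choose the pinching diffeomorphism so that the two cylinder coordinates combine into a single smooth coordinate $h$ ranging over an open interval crossing $0$. In these coordinates the two symplectic forms are literally equal to $\omega_{\Delta_F}+dh\wedge d\theta$ on their respective overlaps, so they extend to a smooth symplectic form on the glued manifold. The $\mathbb{T}^{n-1}\times\mathbb{S}^1$ action splits as in the corollary and extends globally; the combined moment map image is obtained by stacking $\Delta_1$ and $\Delta_2$ across $F$. Since both polytopes are strict parallel with respect to $F$, every edge crossing $F$ does so orthogonally, so the stacked polytope has no new vertices on $F$ and inherits the Delzant property from its two halves. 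By Delzant's classification this gives a toric manifold, and uniqueness identifies it with $\leftidx{_{\rm{cut}}}{X}_{\Delta_1}\cup\leftidx{^{\rm{cut}}}{X}_{\Delta_2}$.

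For the orientation-reversed gluing the two collar forms differ by a sign: in a common smooth coordinate $h$ with $h=0$ on the singular hypersurface $Z=X_{\Delta_F}\times\mathbb{S}^1$, the form reads $\omega_{\Delta_F}+dh\wedge d\theta$ for $h>0$ and $\omega_{\Delta_F}-dh\wedge d\theta$ for $h<0$. I would produce a $b$-symplectic form with singular hypersurface $Z$ by the following modification. On a smaller collar pick a smooth function $\chi(h)$ with $\chi\equiv 1$ near $h=0$ and $\chi\equiv 0$ outside the small collar, and replace $dh\wedge d\theta$ by $\chi(h)\,\tfrac{dh}{h}\wedge d\theta+(1-\chi(h))\,dh\wedge d\theta$ on $h>0$ and by its sign-flipped version on $h<0$; then interpolate further, via Moser, inside the annular shell where $\chi$ transitions so that the modified form agrees with $\omega_{\Delta_F}+dh\wedge d\theta$ outside the collar. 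The result is a closed $b$-form, since $d(dh/h)=0$, whose $n$-th power is nonvanishing as a section of $\Lambda^{2n}(\leftidx{^b}TM)$ on the entire collar (the $\mathbb{S}^2$ factor contributes a $b$-symplectic area form of the sphere type considered in Section \ref{section prelim}). Hamiltonicity of $\mathbb{T}^n$ then follows because on $h\ne 0$ the action is Hamiltonian with the standard moment map, and near $Z$ the $\mathbb{S}^1$ factor acquires the $b$-primitive $\log|h|$ while the $\mathbb{T}^{n-1}$-part is unchanged; this identifies the modular weight of $Z$ with the direction $\pm t^\perp$ on the two sides, consistent with the sign-flip predicted in claim 20 of \cite{btoric}.

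The main obstacle is the $b$-symplectic step in the reversed case: one must ensure that the cut-off interpolation can be performed so that the resulting form is simultaneously closed, $b$-non-degenerate, and $\mathbb{T}^n$-invariant, with a globally defined $b$-moment map rather than only local $b$-primitives. The Moser argument handles the first and third conditions once the model form $\omega_{\Delta_F}+dh/h\wedge d\theta$ is matched to the smooth collar form outside an annular region, and $b$-non-degeneracy reduces to an explicit check in the product coordinates; the global $b$-moment map is then obtained by stacking the two moment maps along the modular weight, which is exactly the $b$-codomain $\mathcal{R}_{\mathcal{G}}$ of the graph with one edge joining the two vertices $\Delta_1,\Delta_2$.
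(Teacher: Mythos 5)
Your proposal follows the same strategy as the paper for both cases: use the local model theorem to reduce the gluing to a cylinder-over-$X_{\Delta_F}$ neighborhood, match the smooth forms directly in the same-orientation case, and in the reversed case interpolate between the smooth collar form $\pm dh\wedge d\theta$ and the singular form $dh/h\wedge d\theta$ via a cutoff supported near $h=0$, with the $\mathbb{T}^{n-1}\times\mathbb{S}^1$ splitting of the torus providing the Hamiltonian structure and the $\mathbb{S}^1$-moment map solving an ODE in $h$ to produce the $\log|h|$ behaviour. That is precisely the paper's partition-of-unity argument, just written with a single cutoff $\chi(h)$ rather than three functions $\rho_1,\rho_2,\rho_3$.

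One comment: the appeal to Moser in the reversed case is unnecessary and in fact does not quite fit. Your interpolated form already equals $\omega_{\Delta_F}\pm dh\wedge d\theta$ outside the small collar by construction (where $\chi\equiv 0$), so there is nothing to correct. Moreover, Moser's trick in the $b$-setting would require isotoping through a family of cohomologous $b$-symplectic forms and would only give a diffeomorphism onto the target, not a canonical identification with it; the paper avoids this entirely by observing that the interpolated form is automatically closed (every $g(h)\,dh\wedge d\theta$ is closed) and that $b$-nondegeneracy is an explicit pointwise check in the product coordinates, which is what you should do in place of the Moser step. Also note that your same-orientation argument reaches for Delzant's classification to identify the glued manifold, whereas the paper simply exhibits the symplectic form, the action, and the moment map piecewise and checks compatibility directly; both routes are fine, but the direct construction is what lets the paper also read off Corollary~35 (the polytope of the glued manifold) without any further work.
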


\begin{proof}
The same orientation gluing
$\leftidx{_{\rm{cut}}}{X}_{\Delta_1}\cup\leftidx{^{\rm{cut}}}{X}_{\Delta_2}$
is easy to prove possessing a toric structure. In fact, because of
the local model proved in the above part, it is plain to see that
the symplectic form on
$\leftidx{_{\rm{cut}}}{X}_{\Delta_1}\cup\leftidx{^{\rm{cut}}}{X}_{\Delta_2}$
should be chosen as
\begin{equation}
\omega=\left\{
\begin{aligned}
\omega_{\Delta_1}&,&\text{when the point is in
$\leftidx{_{\rm{cut}}}{X}_{\Delta_1}$},\\
\omega_{\Delta_2}&,&\text{when the point is in
$\leftidx{^{\rm{cut}}}{X}_{\Delta_2}$},\\
\omega_{\Delta_F}+dh\wedge d\theta&,&\text{when the point is in the
cut loci}.
\end{aligned}
\right.
\end{equation}
Because of the local symplectomorphism above this is well-defined.
As for the Hamiltonian action, by Corollary 32, we directly define
the toric action as the usual action at points of
$\leftidx{_{\rm{cut}}}{X}_{\Delta_1}$,
$\leftidx{^{\rm{cut}}}{X}_{\Delta_2}$, and we define the action on
the cut loci as $\mathbb{T}^n=\mathbb{T}^{n-1}\times\mathbb{S}^1$
action on $X_{\Delta_F}\times\mathbb{S}^1$. This is obviously
well-defined, and the action is Hamiltonian with respect to the
moment map
\begin{align*}
\mu=\left\{
\begin{aligned}
\mu_{\Delta_1}&,&\text{when the point is in
$\leftidx{_{\rm{cut}}}{X}_{\Delta_1}$},\\
\mu_{\Delta_2}&,&\text{when the point is in
$\leftidx{_{\rm{cut}}}{X}_{\Delta_2}$},\\
(\mu_{\Delta_F},h)&,&\text{when the point is in the cut loci}.
\end{aligned}
\right.
\end{align*}
Here, of course, one should choose $\mu_{\Delta_i}$ suitably to make
the image pinched together at level $h$.

The difficult part is gluing through opposite orientation. In the
opposite gluing case, we will apply the partition as the Figure
\ref{fig:normal2}.

First, by the construction, the smooth structure is clear. For the
$\mathbb{T}^n$ action on it, we decompose it into
$\mathbb{T}^{n-1}\times\mathbb{S}^1$, as in Corollary 33. Let
$\mathbb{T}^{n-1}$ acts on
$\leftidx{_{\rm{cut}}}{X}_{\Delta_1}\cup\overline{\leftidx{^{\rm{cut}}}{X}_{\Delta_2}}$
by the usual action on $\leftidx{_{\rm{cut}}}{X}_{\Delta_1}$ and
$\leftidx{^{\rm{cut}}}{X}_{\Delta_2}$. However, for the
$\mathbb{S}^1$ action we let it act on
$\leftidx{_{\rm{cut}}}{X}_{\Delta_1}$ as usual, but act on
$\leftidx{^{\rm{cut}}}{X}_{\Delta_2}$ reversely. This action will be
compatible with the smooth structure. The question now is that the
moment map image will not be compatible together. Now we introduce
the partition function, as the following picture shows.

\begin{figure}[ht]
\includegraphics[width=6.3cm,height=6cm]{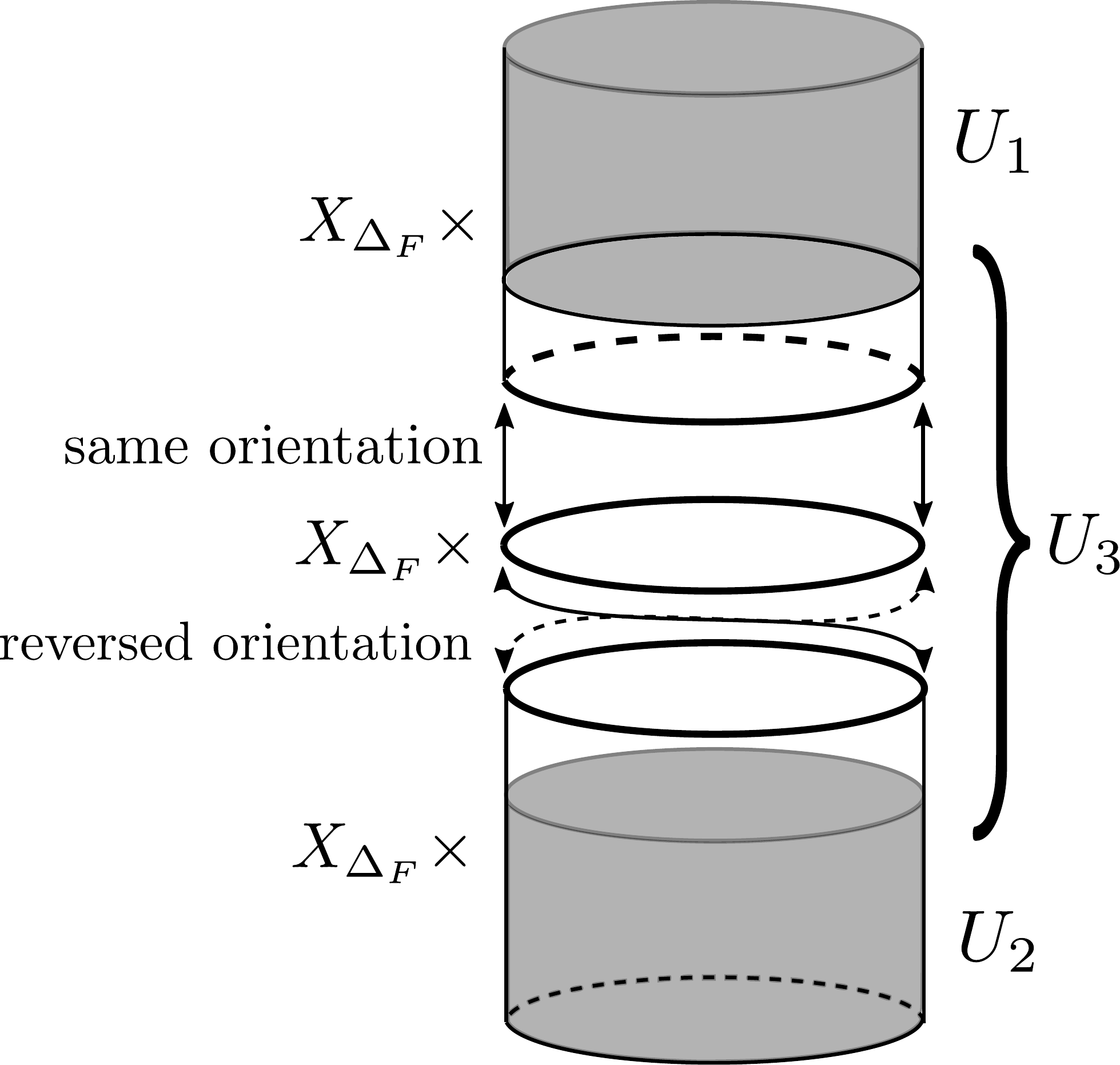}
\caption{The partition.} \label{fig:reversed}
\end{figure}
We use $U_3$ to denote cental part, and $U_1$ $U_2$ to denote the
shaded open set. Hence $U_i$ forms an open covering of
$\leftidx{_{\rm{cut}}}{X}_{\Delta_1}\cup\overline{\leftidx{^{\rm{cut}}}{X}_{\Delta_2}}$.
Define the partition functions $\rho_i$ compactly supported in $U_i$
and satisfy the conditions
\begin{itemize}
\item $\rho_1+\rho_2+\rho_3=1$.
\item $\rho_i\geq0$.
\end{itemize}
Now we define the $b$-symplectic form as
\begin{align*}
\omega=\rho_1\omega_{\Delta_1}+\rho_2\omega_{\Delta_2}+\rho_3(\omega_{\Delta_F}+c\frac{1}{h}dh\wedge
d\theta).
\end{align*}
Here $c$ is some nonzero constant. It is easy to verify this is a
$b$-symplectic form (in fact the part in the form that does not
involve $dh$ will form $\omega_{\Delta_F}$, and the part involving
$dh$ definitely is closed). Note that this $b$-symplectic form is
compatible with the orientation of
$\leftidx{_{\rm{cut}}}{X}_{\Delta_1}\cup\overline{\leftidx{^{\rm{cut}}}{X}_{\Delta_2}}$
no matter above or under the cut loci.

What left is to examine the action defined above is Hamiltonian. By
the decomposition of
$\mathbb{T}^n=\mathbb{T}^{n-1}\times\mathbb{S}^1$, we only need to
verify the existence of moment maps for both of them. Let
\begin{align*}
\iota_{\mathbb{T}^{n-1}}&:
\rm{Lie}\,(\mathbb{T}^{n-1})\rightarrow\rm{Lie}\,(\mathbb{T}^{n}),\\
\iota_{\mathbb{S}^{1}}&:
\rm{Lie}\,(\mathbb{S}^{1})\rightarrow\rm{Lie}\,(\mathbb{T}^{n}).
\end{align*}
For the $\mathbb{T}^{n-1}$ action, we can directly use the
definition $\iota_{\mathbb{T}^{n-1}}^*\circ\mu_{\Delta_i}$ same as
before, since the $\mathbb{T}^{n-1}$ action is not reversed. As for
the $\mathbb{S}^1$ action, in the part $U_1\backslash U_3$ we have
$\omega=\omega_{\Delta_1}$ and we simply define the moment map as
$\iota^*_{\mathbb{S}^1}\circ\mu_{\Delta_1}$. When it comes to
$U_3=X_{\Delta_F}\times\text{cylinder}$, by the local model above,
the action will be rotation along the cylinder. And to find a moment
map it suffices to find a function of $h$, $f(h)$ such that
$$df(h)=\rho_1dh+\rho_2dh+\rho_3c\frac{1}{h}dh.$$
Such a function obviously exists and by adding some constant, we can
make $f(h)$ and $\iota^*_{\mathbb{S}^1}\circ\mu_{\Delta_1}$ patch
together becomes a continuous function. Similarly, by adding some
constants to $-\iota^*_{\mathbb{S}^1}\circ\mu_{\Delta_2}$ we can
also make $f(h)$ and $-\iota^*_{\mathbb{S}^1}\circ\mu_{\Delta_2}$
path together to be a continuous function (note that there is a
negative sign, because the $\mathbb{S}^1$ action is reversed at this
part). We denote the function after patched together as above by
$f(h)$. Away from the cut loci it is obviously smooth. And around
the cut loci, it will take the form $f(h)=c\log|h|+g(h)$ for some
smooth function $g(h)$. Hence
$(\iota_{\mathbb{T}^{n-1}}^*\circ\mu_{\Delta_i},f(h))$ is a $b$-map
to the $b$-moment map codomain, satisfying the condition of a
$b$-moment map with respect to the toric action as above.

Till now we can conclude that the theorem holds.
\end{proof}

In fact, from the proof one can conclude more about the toric and
$b$-toric structure constructed above.
\begin{corollary}
For the toric structure constructed above, the Delzant polytope
correspond to it will be the union of $\Delta_1$ and $\Delta_2$
along $\Delta_F$, and the symplectic form of it is canonical. For
the $b$-toric structure, by the construction the $b$-symplectic form
is not canonical, and the corresponding $b$-Delzant polytope is
still the union of $\Delta_1$ and $\Delta_2$ along $\Delta_F$, with
negative infinity hyperplane at $F$, and the $b$-Delzant polytope
falls on the $b$-moment codomain coming from a weighted adjacent
graph with two vertices, one edge and the modular weight $-ct_n^*$.
\end{corollary}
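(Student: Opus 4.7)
The statement has two halves, one per gluing, and my plan for each is to read off the relevant polytope data from the explicit construction in the proof of the preceding gluing theorem rather than to do new geometric work. The only genuinely new verification needed is the Delzant condition at vertices on the cut $\partial\Delta_F$.

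For the orientation-preserving gluing I would first observe that no partition of unity was used, since the three local formulas $\omega_{\Delta_1}$, $\omega_{\Delta_F}+dh\wedge d\theta$, $\omega_{\Delta_2}$ already agree on overlaps by the local model theorem; hence the global $\omega$ coincides on each piece with the canonical symplectic form of the corresponding toric building block, which is what ``canonical'' means. The piecewise moment map then has image $\Delta_1\cup_{\Delta_F}\Delta_2$ after aligning the cut levels. To check that this union is Delzant, I would work vertex-by-vertex: at a vertex $v\in\partial\Delta_F$ the two vertical edges contributed by $\Delta_1$ and $\Delta_2$ are collinear (both perpendicular to $F$) and merge into a single straight edge, which together with the edges of $\Delta_F$ at $v$ (Delzant by the $\Delta_F$-lemma) yields a lattice basis of $\mathbb{Z}^n$; vertices away from $F$ inherit Delzantness trivially from $\Delta_1$ or $\Delta_2$.

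For the orientation-reversed gluing I would verify the data in order. The singular hypersurface is $Z=\{h=0\}=X_{\Delta_F}\times\mathbb{S}^1$, and $M\setminus Z$ has two connected components, forcing the weighted adjacency graph to have two vertices and one edge. Next I would extract the modular weight directly from the Hamiltonian $f(h)=c\log|h|+g(h)$ for the $\mathbb{S}^1$-factor constructed in the proof: by the definition of modular weight, $\nu_Z$ is the linear functional $X\mapsto(\text{coefficient of }\log|h|\text{ in }H_X)$, and because this $\mathbb{S}^1$ is the $t_n$-factor but acts with reversed orientation on $\overline{\leftidx{^{\rm{cut}}}X_{\Delta_2}}$, the weight evaluates to $-ct_n^*$. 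Finally, the $b$-moment map restricted to each component of $M\setminus Z$ equals $\mu_{\Delta_i}$, so its image on the corresponding sheet of $\mathcal{R}_{\mathcal{G}}$ is $\Delta_i$; since $f(h)\to-\infty$ as $h\to 0$, the facet $F$ is realized as the negative-infinity hyperplane in $\mathcal{Z}_{\mathcal{G}}$, and the canonical $\mathcal{R}_{\mathcal{G}}$-identification glues the two copies of $F$ together, producing the asserted $b$-Delzant polytope. The main place where care is needed is the sign of the modular weight---the entire distinction between the symplectic and the $b$-symplectic gluing lives in that one orientation reversal---so this is the most likely source of an error.
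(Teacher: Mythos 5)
Your proposal is essentially correct and follows the same route as the paper, which gives no separate argument for the corollary and merely asserts that it can be read off from the construction in the preceding gluing theorem. Two small imprecisions worth flagging. First, the $-c$ in the modular weight does not really come from the orientation reversal but from the sign convention: the modular weight is defined from the primitive $H_X$ of $\iota_{X^{\#}}\omega$, whereas the moment map satisfies $\iota_{X^{\#}}\omega=-d\mu^X$, so $H_X=-\mu^X$ up to a constant; since $\mu^{\mathbb{S}^1}=f(h)=c\log|h|+g(h)$ near $Z$, the $\log$-coefficient of $H_{\mathbb{S}^1}$ is $-c$. The orientation reversal is what forces $\nu_Z\neq 0$ at all, and the sign of $c$ itself is pinned down by requiring the $b$-form to be compatible with the glued orientation, but the appearance of the minus sign in front of $c$ is a convention. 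Second, after gluing, a point $v\in\partial\Delta_F$ is \emph{not} a vertex of $\Delta_1\cup_{\Delta_F}\Delta_2$ but lies in the interior of the merged vertical edge, so there is no lattice-basis condition to check there; the real content of your observation is that strict parallelism forces the matching edges (and likewise the vertical facets adjacent to $\Delta_F$) to be collinear, hence to merge, after which the Delzant condition at the true vertices, all of which lie away from $F$, is inherited directly from $\Delta_1$ and $\Delta_2$.
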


Have already established the theorem above, now one can repeat this
kind of cutting and gluing process. Hence it is easy to see that all
$b$-toric manifolds can be got by cutting and gluing several toric
manifolds together.

\begin{example}
If we cut and glue two Hirzebruch surface $H_1$ together with
Delzant polytopes corresponding to the figure as follow, we will get
the corresponding toric and $b$-toric manifold shown in the picture \ref{fig:gluingexample}.
\begin{figure}[ht]
\includegraphics[width=5.8cm,height=6.5cm]{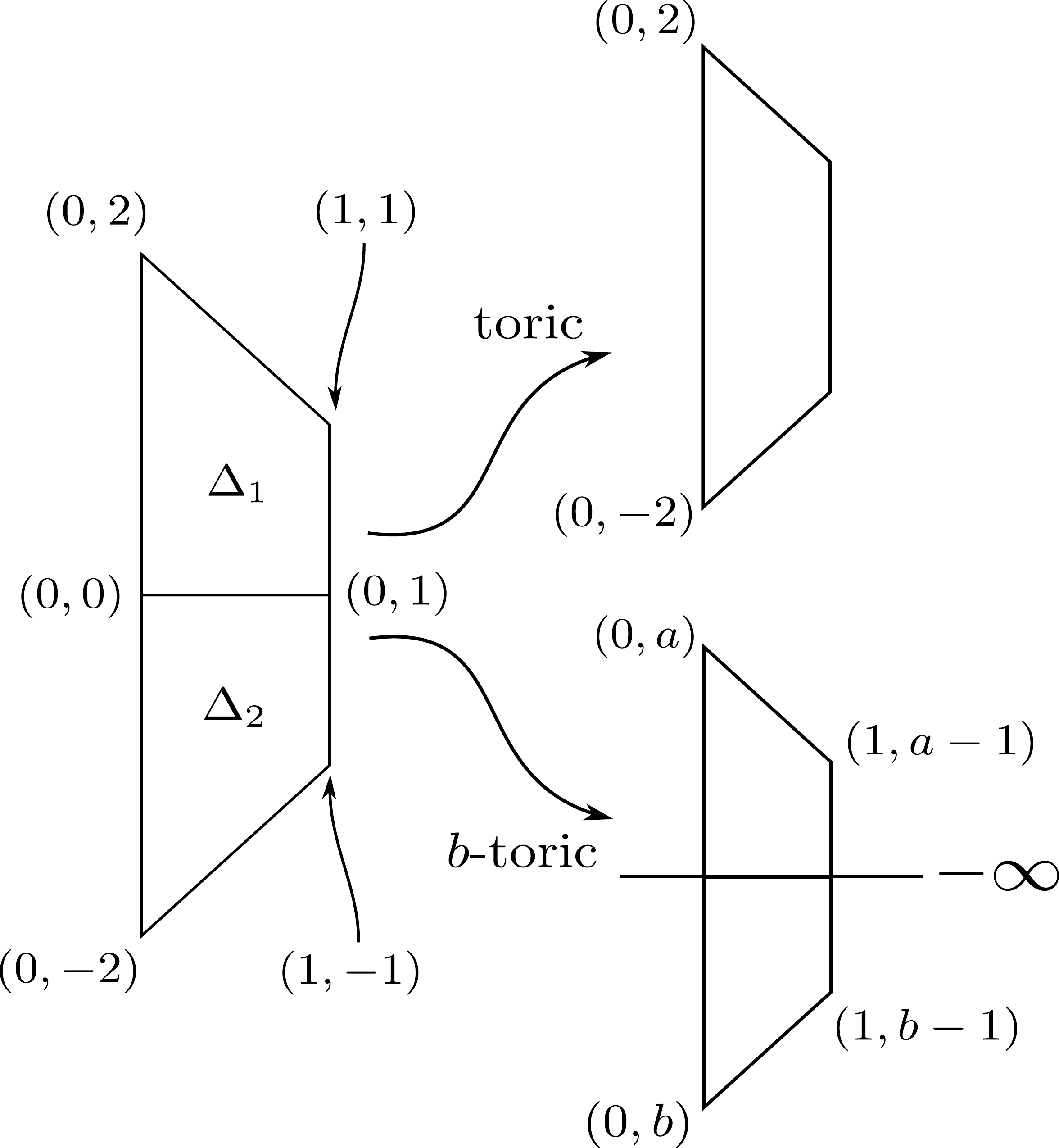}
\caption{Gluing Hirzebruch surfaces.} \label{fig:gluingexample}
\end{figure}

The $a$ and $b$ in the picture \ref{fig:gluingexample} will depend on our choice of
$b$-symplectic form.
\end{example}

\begin{remark}
In fact, if we examine the proof of Theorem 25 carefully, the above
cutting and gluing operation can be applied to parallel toric
manifold by cutting at a parallel hypersurface, then gluing along
the cut loci. And once again this can produce toric or $b$-toric
manifold, depending on the choice of orientation.
\end{remark}

\section{Decomposition to several toric manifolds}\label{section decomposition}

In this section we converse the construction in the above section.
Given a $b$-symplectic manifold we can decompose it into those
building blocks. But this decomposition, just as the construction,
is not canonical.

Given a $b$-toric manifold $(M,Z,\omega_M)$, by Proposition 26 in
\cite{delzant} we have local model for a $b$-toric manifold around
$Z$. That is to say, there will exists a small neighborhood $U$ of
$Z$ such that there exists a equivariant $b$-symplectomorphism from
$U$ to $X_{\Delta_F}\times\mathbb{S}^1\times\mathbb{R}$, with
symplectic form $\omega_{\Delta_F}+c\frac{1}{h}dh\wedge d\theta$.
Here $c$ is some suitable constant. Moreover the singular
hypersurface $Z$ is mapped to
$X_{\Delta_F}\times\mathbb{S}^1\times\{0\}$.

Consider a connected component in $M\backslash Z$, denoted by
$X_{\rm{sing}}$. Without loss of generality we can assume that there
is only one connected component in $Z$ that is adjacent to
$X_{\rm{sing}}$ and $X_{\rm{sing}}$ looks like
$X_{\Delta_F}\times\mathbb{S}^1\times\mathbb{R}^+$ in the local
model. By an abuse of notation we still denote this connected
component in $Z$ by $Z$. We will equip $X_{\rm{sing}}/Z$ a toric
structure (which has many different choices).\

Because of the local model we mentioned above, in $X\cap Z$, around
$Z$ we will have a local model for it and the quotient will be as in
the following picture.

\begin{figure}[ht]
\includegraphics[width=11cm,height=4cm]{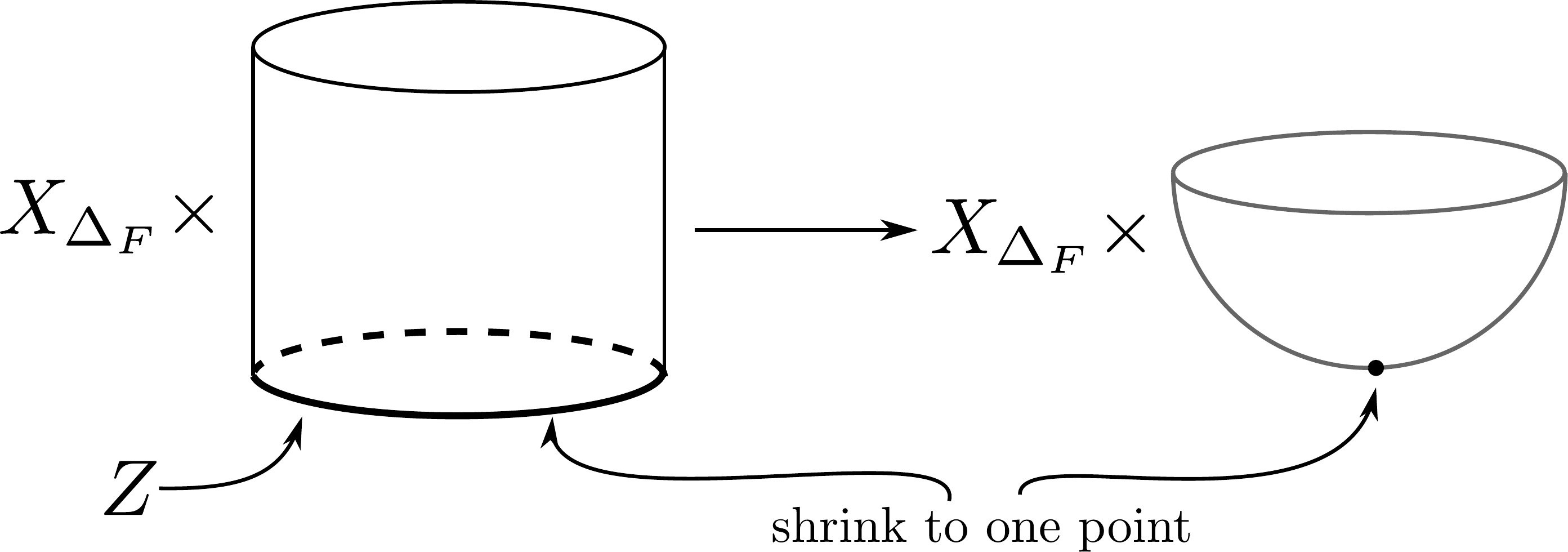}
\caption{The quotient operation.} \label{fig:shrink}
\end{figure}

From the local model, it is easy to give the quotient space
$X_{\rm{sing}}/Z$ a smooth structure, such that around the point
$[Z]$ in the quotient space $X_{\rm{sing}}/Z$ it looks like
$X_{\Delta_F}\times\text{part of a sphere}$. Now $X_{\rm{sing}}/Z$
is a closed smooth manifold.

\subsection{symplectic structures}

We need to attach the quotient space with a symplectic structure. It
turns out that there is not a canonical way to give a symplectic
structure and this is why we said the decomposition is not
canonical.

Still, we let $V$ be an open neighborhood of
$X_{\rm{sing}}\backslash U$, here we use $U$ to denote the local
model neighborhood mentioned above (i.e. $U$ symplectomorphic to the
model $X_{\Delta_F}\times\mathbb{S}^1\times\mathbb{R}$). Futhermore,
we assume $V\cap U$ takes the form
$X_{\Delta_F}\times\mathbb{S}^1\times(\delta,+\infty)$, for some $\delta$ sufficiently
small. In the quotient space we still use $U$ and $V$ to denote the
corresponding open sets. Now, $U$ and $V$ form an open cover of
$X_{\rm{sing}}/Z$, we define partition functions $\rho_U$ and
$\rho_V$ with respect to this open cover as before. Hence, one can
define the following symplectic form on $X_{\rm{sing}}/Z$
\begin{align*}
\omega=\rho_V\omega_M+\rho_U(\omega_{\Delta_F}+dh\wedge d\theta).
\end{align*}

It is quite clear that this is a well-defined symplectic form (the
reason is the same as our first partition operation). Also one can
see clearly why this form is not canonical.

\subsection{toric structures}

In this part we deal with the torus action. As a matter of fact the
original $\mathbb{T}^n$ action induces a $\mathbb{T}^n$ action on
the quotient space $X_{\rm{sing}}/Z$. Moreover, around the point
$[Z]\in X_{\rm{sing}}/Z$ this action can be split to
$\mathbb{T}^{n-1}$ action on $X_{\Delta_F}$ and $\mathbb{S}^1$
action on the part of a sphere. The only thing left to be checked is
Hamiltonian action. Indeed, after we split to $\mathbb{T}^{n-1}$ and
$\mathbb{S}^1$, similarly, we only need to check them separately.
For $\mathbb{T}^{n-1}$, since in $U$ the symplectic form $\omega$
becomes
\begin{align*}
\omega&=\rho_V(\omega_{\Delta_F}+c\frac{1}{h}dh\wedge
d\theta)+\rho_U(\omega_{\Delta_F}+dh\wedge d\theta),\\
&=\omega_{\Delta_F}+\rho_Vc\frac{1}{h}dh\wedge
d\theta+\rho_Udh\wedge d\theta,
\end{align*}
this part of the action is obviously Hamiltonian. For the second
part, still, in $(X_{\rm{sing}}/Z)\backslash U$ this is trivial and
in $U$, we can directly find a function $f(h)$ such that
$$df(h)=\rho_Vc\frac{1}{h}dh+\rho_Udh.$$
And by adding some suitable constant we can glue the moment map for
$V$ part and $f(h)$ together.

Till now we can conclude the space $X_{\rm{sing}}/Z$ does posses a
toric structure. And from the construction, it is easy to see that
the Delzant polytope will be the intersection of the part of the
$b$-Delzant polytope in $(\rm{Lie}\,\mathbb{T}^n)^*$ which
corresponds to $X_{\rm{sing}}$, with a upper half space defined by
some hyperplane parallel to the $t_1^*,\dots,t_{n-1}^*$ plane. And
the this hyperplane will be strict parallel to the Delzant polytope.

The following picture illustrates this situation.

\begin{figure}[ht]
\includegraphics[width=8cm,height=5.2cm]{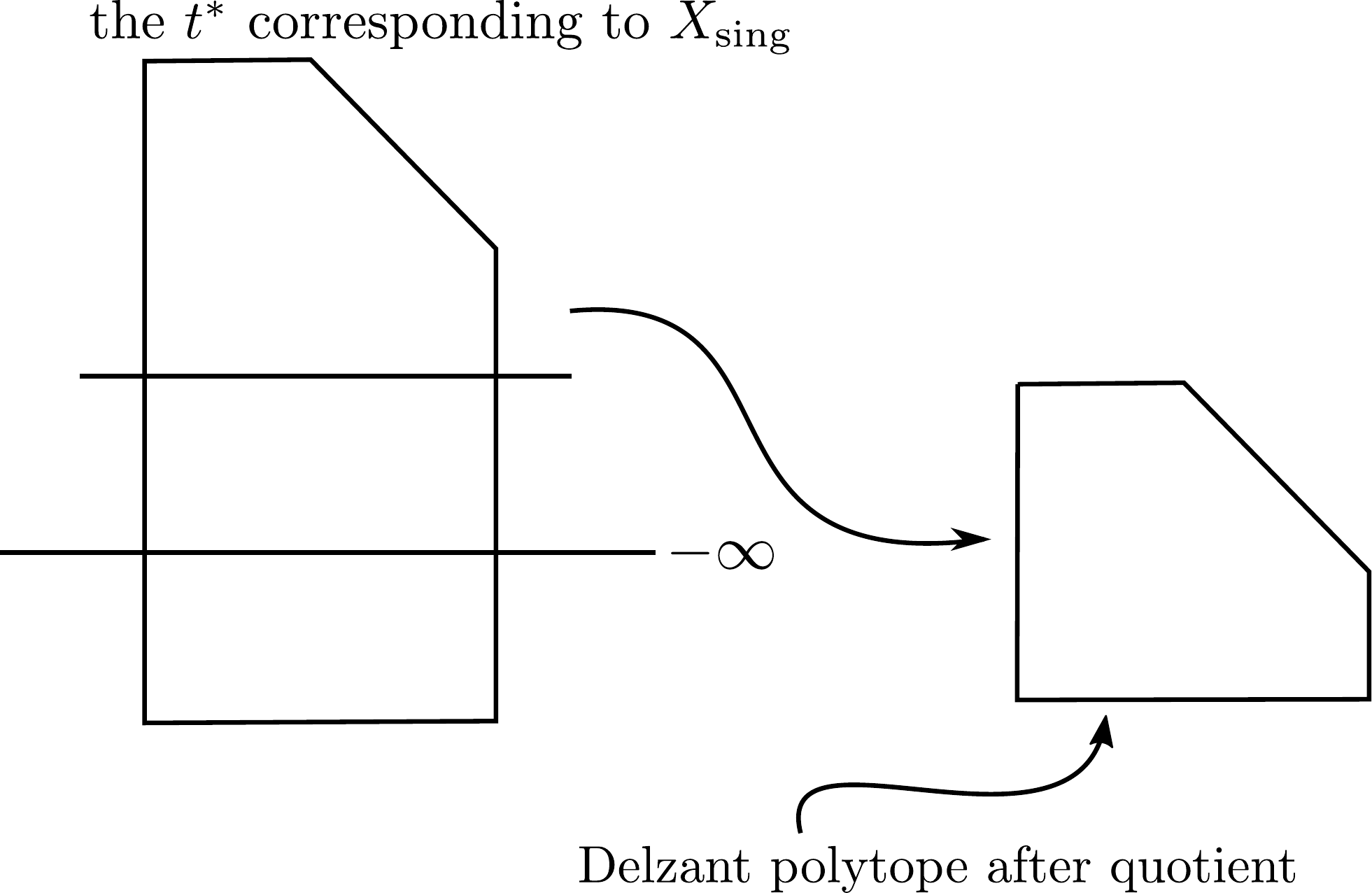}
\caption{Delzant polytope after decomposition.}
\label{fig:decomposition}
\end{figure}

Now, by the decomposition described above, one can decompose a
$b$-symplectic manifold piece by piece to get several toric
manifolds (the choice of symplectic structures on them are not
unique but the manifolds and the actions are unique). And it is easy
to see that from those decomposed manifolds, by the gluing technique
we can recover the $b$-symplectic manifold (still, the action and
$b$-manifold are unique but the $b$-symplectic form is not). From the process of decoposition and construction it is easy to see, if we decompose a $b$-toric manifold and then we use the partition functions used in the decomposition to glue those toric manifolds together to get a $b$-toric manifold, we will end up with the exactly same $b$-toric manifold, with same $b$-symplectic form, as we started. 

\begin{example}
For example consider $X_{\Delta_F}\times\mathbb{T}^2$, where
$\mathbb{T}^2$ will be equipped with $b$-toric structure with two
singular hypersurfaces, as in Example 25 in \cite{delzant} or
Example 8 as above. Decompose this will give us two
$X_{\Delta_F}\times\mathbb{S}^2$. And gluing them will return us
with $X_{\Delta_F}\times\mathbb{T}^2$.
\end{example}

%


\section{Applications}\label{section application}

In this section we will try to show that this kind of construction
and decomposition are useful.

\subsection{Homology of $b$-toric manifold}
In \cite{silva2}, it was shown that the homology of a toric manifold
can be computed perfectly by using Morse theory. Here, we will carry
this way to the $b$-toric case and compute the homology of a
$b$-toric manifold. It turns out that the homology of $b$-toric
manifold can be computed easily also.

First, it is plain to see that the case
$X_{\Delta_F}\times\mathbb{S}^2$ case is easy to compute. Because we
already know the homology of the usual toric manifold $\Delta_F$, by
K$\ddot{\rm{u}}$nneth formula one can get the homology of
$X_{\Delta_F}\times\mathbb{S}^2$.

So it boils down to consider the case that the adjacent graph does
no form a loop. Select a generic element
$X\in\rm{Lie}\,(\mathbb{T}^n)$, such that its components are
independent over $\mathbb{Q}$. Without loss of generality, we assume
that there is only one edge and two vertices in the adjacent graph
and hence the $b$-toric manifold will be $(M,Z,\omega)$, where $Z$
has only one component and $M\backslash Z$ will have only two
components. We also assume in the local model the $b$-moment map in
the local model takes the form $c\log|h|+g(h)$ with $c<0$, i.e. the
modular weight takes the form $-ct^*_n$ which is along the direction
of $-t^*_n\in(\rm{Lie}\,\mathbb{T}^n)^*$. The proof of the general
case will be almost the same. Denote those two components as $M_1$
and $M_2$. Corresponding to this adjacent graph, define the
following $1$ dimensional $b$-manifold as the image of our Morse
function.

\begin{itemize}
\item As a set, it is defined as
$\mathbb{R}_2\sqcup\{e\}\sqcup\mathbb{R}_1$.
\item For the smooth structure, we endow a smooth structure on it
such that the following map is smooth.
\begin{align*}
f(x)=\left\{
\begin{aligned}
\exp(\frac{x}{c})&,&\text{when $x\in\mathbb{R}_2$},\\
0&,&\text{when $x\in\{e\}$},\\
-\exp(\frac{x}{c})&,&\text{when $x\in\mathbb{R}_1$}.
\end{aligned}
\right.
\end{align*}
where $c$ depends on the modular weight.
\end{itemize}

Just as in Section 5 of \cite{btoric}, such a smooth structure
exists. This is simply identifying the $-\infty$ of $\mathbb{R}_2$
and the $-\infty$ of $\mathbb{R}_1$ together to form a point $e$.
This Morse function codomain will be denoted as $b_M$. We introduce
an order $(b_M,\lessdot)$ structure on it, by
\begin{itemize}
\item for $a$, $b\in\mathbb{R}_2$, $a\lessdot b$ if $a>b$.
\item for $a\in\mathbb{R}_2\cup\{e\}$, $b\in\mathbb{R}_1$, $a\lessdot
b$.
\item for $a\in\mathbb{R}_2$, $b\in\mathbb{R}_1\cup\{e\}$, $a\lessdot
b$.
\item for $a$, $b\in\mathbb{R}_2$, $a\lessdot b$ if $a<b$.
\end{itemize}

The Morse function will be defined to be
$\mu^X=\langle\mu,X\rangle$, with $\mu^X$ takes value in
$\mathbb{R}_1$ when the point is in $M_1$, $\mu^X$ takes value in
$\mathbb{R}_2$ when the point is in $M_2$ and $\mu^X$ takes value in
$\{e\}$ when the point is in the singular hypersurface $Z$. This
function is clearly a $b$-map and is smooth. We will consider the
two components $M_i$ separately. Note that they can be viewed as
open toric manifolds from our construction in Section \ref{section
construction}. Because of the fact that the action is Hamiltonian
the equality $d\mu_X=-\iota_{X^{\#}}\omega$ holds, which implies
that the critical points of this function can only appear in the
fixed point set of the action in $M_i$. By our decomposition and
construction around a fixed point $p$ the symplectic form and action
are the same with the usual toric manifold. So as in \cite{silva2}
there is a chart such that the symplectic form and the moment map
take the form
\begin{align*}
\omega=\sum_{k=1}^ndx_k\wedge dy_k.
\end{align*}
\begin{align*}
\mu=\mu(p)-\frac{1}{2}\sum_{k=1}^n\lambda^{(k)}(x_k^2+y_k^2).
\end{align*}
Here the $-\lambda_k$'s are precisely the edge vectors of the fixed
point $p$ in the $b$-Delzant polytope (this can be seen easily once
we decompose the $b$-toric manifold to toric manifolds). Hence the
index at a fixed point $p$ can be read from the $b$-Delzant
polytope. That is to say, taking twice the number of edges which are
pointing up with respect to $X$ (i.e.
$\langle-\lambda^{(k)},X\rangle>0$). This index will be denoted as
$\rm{ind}_p$

\begin{figure}[ht]
\includegraphics[height=5.1cm]{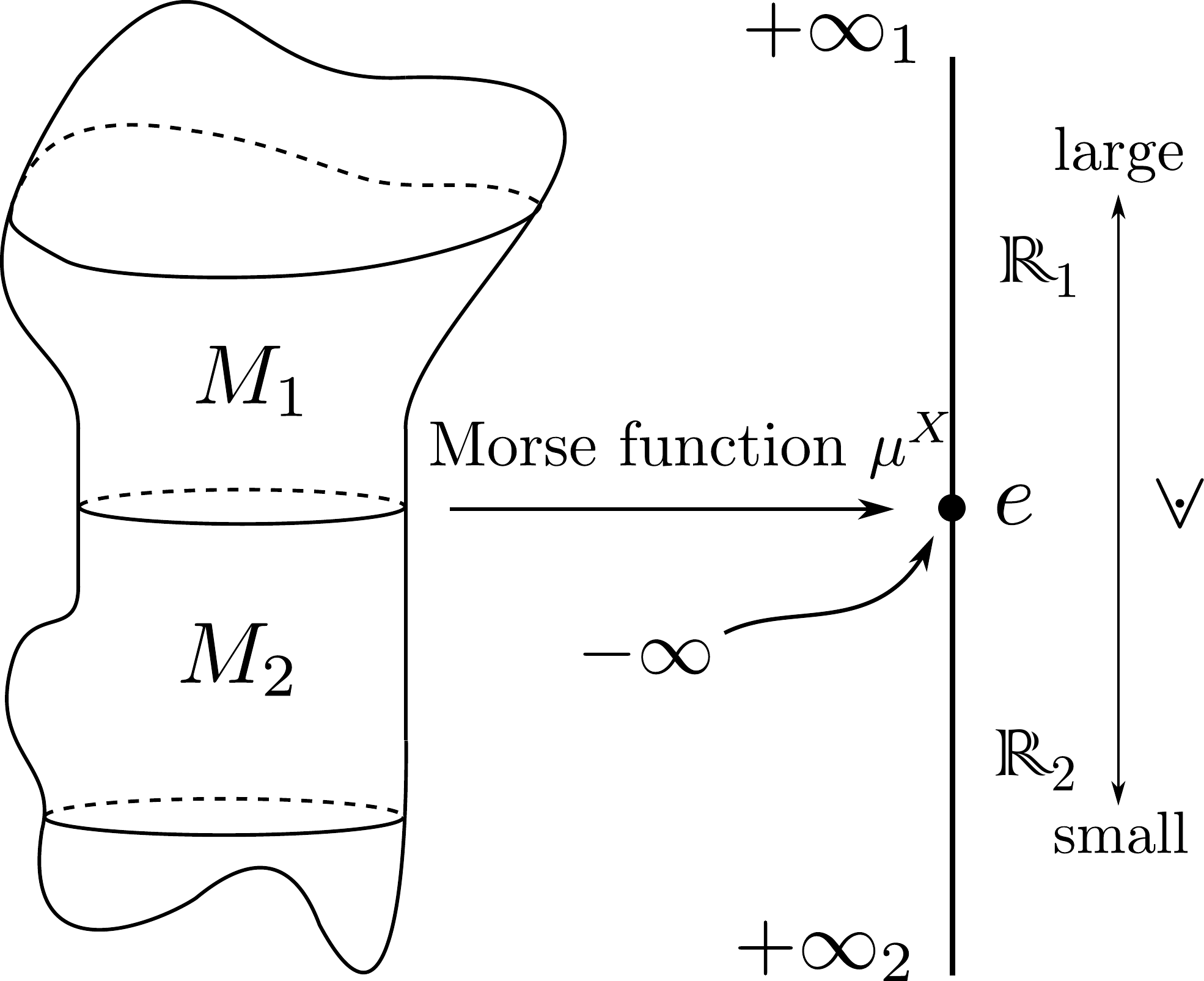}
\caption{The Morse function.} \label{fig:morse}
\end{figure}

So now let $p_1,\ldots,p_s$ denote the fixed points in $M_2$ and
$q_1,\ldots,q_t$ denote the fixed points in $M_1$. Moreover, assume
they are arranged such that
$\mu^{X}(p_1)\lessdot\ldots\lessdot\mu^{X}(p_s)\in\mathbb{R}_2$ and
$\mu^{X}(q_1)\lessdot\ldots\lessdot\mu^{X}(q_t)\in\mathbb{R}_1$
(from the $b$-Delzant polytope it is easy to see that it is
impossible to have $\mu^{X}(p_i)=\mu^{X}(p_j)$, because our way of
selecting $X$). If we use $+\infty_2$ to denote the positive
infinity of $\mathbb{R}_2$, and we use interval to denote interval
under the order structure $(b_M,\lessdot)$ one will have,
\begin{theorem}
\

\begin{enumerate}[(1)]

\item Given an interval
$[a,b]\subset b_M$ (it is possible for $a$ and $b$ are in different
$\mathbb{R}_i$), such that $[a,b]$ does not contain any critical
value of $\mu^X$, then $(\mu^X)^{-1}(+\infty_2,b]$ has the homotopy
type of $(\mu^X)^{-1}(+\infty_2,a]$.

\item For $\epsilon$ small enough,
$(\mu^X)^{-1}(+\infty_2,\mu^{X}(p_i)-\epsilon]$ will have the
homotopy type of $(\mu^X)^{-1}(+\infty_2,\mu^{X}(p_i)+\epsilon]$
with a cell of dimension $\rm{ind}_{p_i}$.

\item For $\epsilon$ small enough,
$(\mu^X)^{-1}(+\infty_2,\mu^{X}(q_j)+\epsilon]$ will have the
homotopy type of $(\mu^X)^{-1}(+\infty_2,\mu^{X}(q_j)-\epsilon]$
with a cell of dimension $\rm{ind}_{q_j}$.
\end{enumerate}
\end{theorem}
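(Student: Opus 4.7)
The plan is to adapt the classical Morse-theoretic proof of the cell decomposition of a symplectic toric manifold (as in \cite{silva2}) to the $b$-setting, exploiting the decomposition developed in Section \ref{section decomposition}. All three parts are standard consequences of (i) an equivariant gradient flow away from critical values, and (ii) the symplectic normal form at each toric fixed point; the only genuinely new input is the behavior across the identification point $e \in b_M$.

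For part (1), I would fix a $\mathbb{T}^n$-invariant Riemannian metric on $M$ compatible with the $b$-symplectic form away from $Z$, and deform $(\mu^X)^{-1}(+\infty_2, b]$ onto $(\mu^X)^{-1}(+\infty_2, a]$ by following the negative gradient flow of $\mu^X$. On the open set $M\setminus Z$, restricted to each $M_i$, this is the standard argument: the equation $d\mu^X = -\iota_{X^{\#}}\omega$ ensures $\nabla\mu^X$ vanishes exactly at $\mathbb{T}^n$-fixed points, so on $[a,b]$ with no critical value the flow gives a smooth deformation retract in finite time. The only subtlety is when the interval $[a,b]$ crosses $e$. Here I would switch to the local model $U\simeq X_{\Delta_F}\times\mathbb{S}^1\times\mathbb{R}$ near $Z$ (Proposition 26 of \cite{btoric}), in which $\mu^X$ takes the explicit form $c\log|h|+g(h) + \mu^X_{\Delta_F}$; the derivative in the $h$-direction is $c/h + g'(h)$, which is nonzero on a whole neighborhood of $Z$, so $\mu^X$ has no critical points near $Z$. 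Within this model the sublevel sets are products $X_{\Delta_F}\times\mathbb{S}^1\times\{h : c\log|h|+g(h)\leq \text{level}\}$, and a retraction in the $h$-factor across $h = 0$ gives the required homotopy equivalence between sublevel sets on opposite sides of $e$.

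For parts (2) and (3), I would invoke the standard symplectic normal form at a toric fixed point. Since fixed points lie in $M\setminus Z$, the decomposition from Section \ref{section decomposition} together with Corollary 34 says that in a $\mathbb{T}^n$-invariant neighborhood of any fixed point $p$ the $b$-symplectic form agrees with an ordinary toric symplectic form, so the equivariant Darboux coordinates of \cite{silva2} apply and give
\begin{align*}
\omega &= \sum_{k=1}^n dx_k\wedge dy_k,\\
\mu &= \mu(p) - \tfrac{1}{2}\sum_{k=1}^n \lambda^{(k)}(x_k^2+y_k^2),
\end{align*}
with $-\lambda^{(k)}$ the edge vectors at $p$ of the $b$-Delzant polytope. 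Pairing with $X$ yields $\mu^X = \langle\mu(p),X\rangle - \tfrac{1}{2}\sum_k\langle\lambda^{(k)},X\rangle(x_k^2+y_k^2)$, a Morse function of Hessian-index $2\,\#\{k : \langle-\lambda^{(k)},X\rangle > 0\} = \mathrm{ind}_p$. The genericity of $X$ forces $\langle\lambda^{(k)},X\rangle\neq 0$ for all $k$ and all $p$, so the critical points are nondegenerate and their critical values are pairwise distinct under $\lessdot$. The classical Morse lemma (Milnor, Theorem 3.2) then gives the attaching of a cell of dimension $\mathrm{ind}_p$, which establishes (2) and (3) depending on whether $p\in M_2$ (approached from the side of $\mathbb{R}_2$ under $\lessdot$) or $p\in M_1$ (approached from the side of $\mathbb{R}_1$).

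The only real obstacle I anticipate is making the gradient-flow argument in part (1) rigorous as one crosses $e$: the natural metric on $M$ is smooth but the function $\mu^X$ is only a $b$-map, so its gradient blows up at $Z$ and the flow is incomplete there. I would sidestep this by splitting the deformation into two stages: first flow down to a small tubular neighborhood of $Z$ inside $M\setminus Z$ using the standard argument, and then use the explicit product structure in the local model to slide the level sets across $h=0$ by a separate linear homotopy in $h$, exploiting the fact that in the $h$-direction $\mu^X$ is strictly monotone on each side of $Z$ and the identification at $e$ glues the two sides precisely along the common slice $X_{\Delta_F}\times\mathbb{S}^1\times\{0\}$. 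Once this local swap is available, the three statements reduce to the classical Morse-theoretic lemmas applied on each open toric component $M_i$.
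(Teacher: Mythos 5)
Your proposal is correct and follows essentially the same route as the paper: parts (2) and (3) via the equivariant symplectic normal form at the fixed points (which lie in $M\setminus Z$ where the decomposition reduces everything to the ordinary toric setting), and part (1) in the case crossing $e$ by passing to the local product model $X_{\Delta_F}\times\mathbb{R}\times\mathbb{S}^1$ near $Z$ and deforming linearly in the $h$-coordinate. The paper realizes your proposed "slide across $h=0$" concretely as the retraction $F_s(x,t,\theta)=(x,st+(1-s)a',\theta)$, using the boundedness of $\langle\mu_{\Delta_F},X'\rangle$ and the monotonicity of $c\log|h|x_n$ in $h$ to guarantee the sublevel sets are product-like once $|a|,|b|$ are large, which is exactly the workaround you identify for the incompleteness of the gradient flow at $Z$.
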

In fact, in the above argument we have already proved (2) and (3).
The only thing left is the case of (1) that the interval contains
point $e$. But by our local model in the construction, around the
singular hypersurface it is symplectomorphic to
$X_{\Delta_F}\times\mathbb{R}\times\mathbb{S}^1$, and the upper half
$M_1$ in this local model is
$X_{\Delta_F}\times(0,+\infty)\times\mathbb{S}^1$. It will be
similar for the lower half. If we write the generic vector $X$ as
$(X',x_n)$, the Morse function will be
$$\langle\mu_{\Delta_F},X'\rangle+c\log|h|x_n.$$
Now in the local model, the case is shown as in the picture as
follows.

\begin{figure}[ht]
\includegraphics[height=5.1cm]{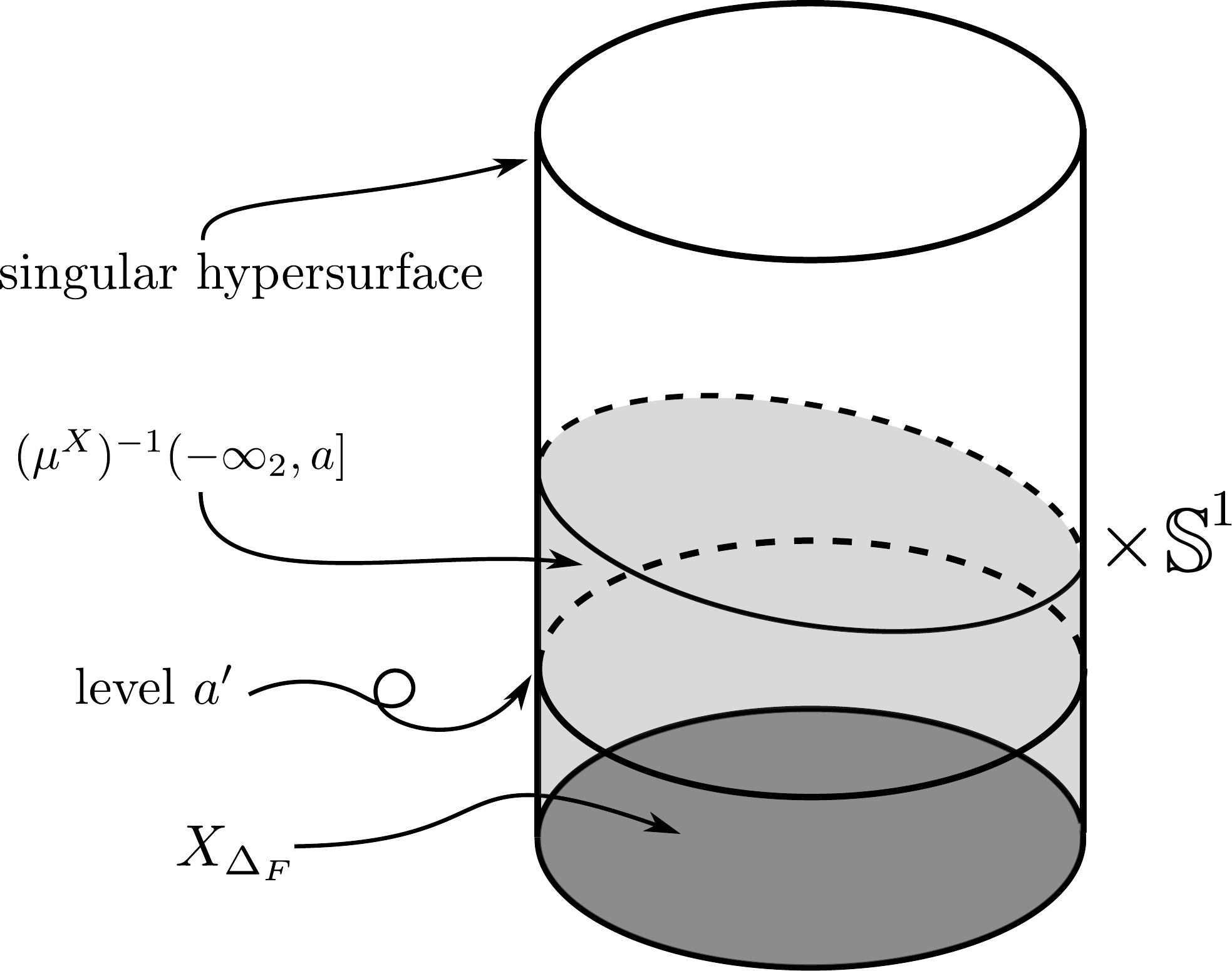}
\caption{$(\mu^X)^{-1}(-\infty_2,a]$ in local model.}
\label{fig:level}
\end{figure}

Without loss of generality, we can assume $|a|$ and $|b|$ are
sufficiently large. The set $(\mu^X)^{-1}(+\infty_2,a]$ will be
equivalent to say the points satisfy
$$\langle\mu_{\Delta_F},X'\rangle+c\log|h|x_n\geq a,$$
in the local model. Notice that $a$ is large and
$\langle\mu_{\Delta_F},X'\rangle$ is bounded because $X_{\Delta_F}$
is a compact manifold. Hence, there must exist an $a'<0$ such that
$$\inf\langle\mu_{\Delta_F},X'\rangle+c\log|a'|x_n\geq a,$$
in the local model, which implies
$X_{\Delta_F}\times(-\infty,a']\times\mathbb{S}^1\subset(\mu^X)^{-1}(+\infty_2,a]$.
It follows that we can define a deformation retract by shrinking
$(\mu^X)^{-1}(+\infty_2,a]$ to
$X_{\Delta_F}\times(-\infty,a']\times\mathbb{S}^1$ in this local
model. Indeed, if $(x,t,\theta)\in(\mu^X)^{-1}(+\infty_2,a]$, then
for all $t'<t$ one has $(x,t',\theta)\in(\mu^X)^{-1}(+\infty_2,a]$.
Hence we can define the deformation retraction
\begin{align*}
F_s(x,t,\theta)=\left(x,st+(1-s)a',\theta\right).
\end{align*}
for $(x,t,\theta)\in(\mu^X)^{-1}(+\infty_2,a]$.

One can do the exactly same thing to $(\mu^X)^{-1}(+\infty_2,b]$ the
upper half, and get some $b'$ such that $(\mu^X)^{-1}(+\infty_2,b]$
can be deformation retracted to
$X_{\Delta_F}\times(-\infty,b']\times\mathbb{S}^1$ in this local
model. And finally
$X_{\Delta_F}\times(-\infty,b']\times\mathbb{S}^1$ obviously can be
deformation retracted to
$X_{\Delta_F}\times(-\infty,a']\times\mathbb{S}^1$ in the local
model. Hence the theorem holds.

Notice that the index of a fixed point is always even as we said
before, hence it is plain to see that as a $CW$-complex, it consists
of even cells and by considering cellular homology we have

\begin{theorem}
Given a generic vector $X\in\mathbb{R}^n$ then for the $b$-toric
manifold $(M,Z,\omega)$, whose adjacent graph does not form a loop
\begin{align*}
\rm{dim}\,H_{2k}(M,\mathbb{Z})&=\#\{\text{fixed point }p|\ \rm{ind}_p= k\}\\
&=\text{number of vertices of the $b$-Delzant polytope such that}\\
&\ \ \ \ \ \ \ \text{there are $k$ edges pointing up with respect to
$X$}.
\end{align*}
\end{theorem}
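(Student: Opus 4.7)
The plan is to deduce the homology formula from Parts (1)--(3) of the previous theorem by the classical Milnor-style Morse-theoretic CW construction. I would order the critical values of $\mu^X$ along $(b_M,\lessdot)$, namely $\mu^X(p_1)\lessdot\cdots\lessdot\mu^X(p_s)\lessdot\mu^X(q_1)\lessdot\cdots\lessdot\mu^X(q_t)$, pick a starting value $a_0\lessdot\mu^X(p_1)$ close enough to $+\infty_2$ that $(\mu^X)^{-1}(+\infty_2,a_0]=\varnothing$, and a terminal value $a_f$ close to $+\infty_1$ such that $(\mu^X)^{-1}(+\infty_2,a_f]$ deformation retracts onto $M$. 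As $a$ moves monotonically from $a_0$ to $a_f$, Part (1) preserves the homotopy type across any interval containing no critical values (including those straddling the "singular value" $e$), while Parts (2) and (3) each attach a single cell of dimension $\rm{ind}_{p_i}$ or $\rm{ind}_{q_j}$ at the corresponding fixed point.

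Next I would verify that every attached cell has even dimension. This is already implicit in the previous theorem: at a fixed point the local normal form gives a quadratic moment map $\mu(p)-\frac{1}{2}\sum\lambda^{(k)}(x_k^2+y_k^2)$, so the critical index along $X$ is $\rm{ind}_p=2\cdot\#\{k:\langle-\lambda^{(k)},X\rangle>0\}$. Accordingly $M$ is homotopy equivalent to a CW complex all of whose cells are even-dimensional, one cell for each fixed point.

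It then follows immediately that the cellular chain complex has all differentials zero (they would land in trivial odd-dimensional chain groups), so $H_{2k}(M,\mathbb{Z})$ is free abelian of rank equal to the number of $2k$-cells. By construction this number equals $\#\{p:\rm{ind}_p=2k\}$, and the combinatorial reformulation is automatic: the edges of the $b$-Delzant polytope at a vertex $p$ are exactly the $-\lambda^{(k)}$ directions, and "pointing upward with respect to $X$" means $\langle-\lambda^{(k)},X\rangle>0$, so the count equals the number of vertices with exactly $k$ upward edges.

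The main obstacle I foresee is the base case of the filtration, i.e.\ justifying that $(\mu^X)^{-1}(+\infty_2,a_0]$ is truly empty for $a_0$ sufficiently close to $+\infty_2$ in $\lessdot$, and the symmetric statement that $(\mu^X)^{-1}(+\infty_2,a_f]$ covers $M$ up to homotopy. This requires a local-model calculation at $Z$ analogous to the one used in the proof of Part (1), this time pushing to the extreme ends of $b_M$ rather than across $e$; one has to use that $M$ is compact and that $\mu^X$ is proper in the $b$-sense so that extreme values are only approached near $Z$. Once this is in hand, everything else is a direct transcription of the standard Morse-CW argument to the $b$-symplectic setting.
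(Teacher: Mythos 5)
Your proposal follows the same route as the paper: the preceding theorem's parts (1)--(3) furnish the Morse--CW filtration via sublevel sets of $\mu^X$ in the ordered codomain $(b_M,\lessdot)$, each fixed point contributes a single even-dimensional cell, and cellular homology with all differentials forced to vanish gives the count. The paper compresses this into one sentence, while you spell out the cell-attaching argument and flag the base case of the filtration; that base case is indeed worth stating, and it resolves exactly as you indicate, since the image of the $b$-moment map is the $b$-Delzant polytope, which is bounded in the moment codomain, so for $a_0$ sufficiently close to $+\infty_2$ the sublevel set $(\mu^X)^{-1}(+\infty_2,a_0]$ is empty, and symmetrically the top sublevel set eventually equals all of $M$. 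One small observation: the paper's definition of $\mathrm{ind}_p$ is \emph{twice} the number of upward-pointing edges, so the theorem statement's condition ``$\mathrm{ind}_p=k$'' should read ``$\mathrm{ind}_p=2k$'' to match the second displayed equality; your proposal uses the correct convention $\mathrm{ind}_p=2k$ throughout, so there is no gap in your argument, only a notational slip in the paper's statement.
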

Hence by the construction and decomposition, we can read the
homology of a $b$-toric manifold from the $b$-Delzant polytope.


\begin{thebibliography}{MYL}

\bibitem[Ab1]{abreu1} M.\ Abreu, \emph{Kahler geometry of toric manifolds in symplectic
coordinates}, arXiv:math/0004122.

\bibitem[Ab2]{abreu2} M.\ Abreu, \emph{Kahler geometry of toric varieties and extremal metrics}, International J. Math. 9 (1998) 641-651.

\bibitem[At]{atiyah} M.F.\ Atiyah, \emph{Convexity and commuting Hamiltonians}, Bull London Math. Soc. \textbf{14}, pp.\ 1-15 (1982).

\bibitem[C1]{silva} A.\ Cannas da Silva, \emph{Lectures on Symplectic
Geometry}, Lecture Notes in Mathematics 1764, Springer-Verlag,
Berlin, 2001.

\bibitem[C2]{silva2} A.\ Cannas da Silva, \emph{Symplectic Toric
Manifolds}, \\
https://people.math.ethz.ch/~acannas/Papers/toric.pdf.

\bibitem[D]{delzant} T.\ Delzant, \emph{Hamiltoniens periodiques et images convexe de l'application
moment}, Bulletin de la Societe Mathematique de France 116.3 (1988),
315-339.

\bibitem[FM]{poisson} R.L.\ Fernandes, I.\ Ioan
M$\rm{\breve{a}}$rcut, \emph{Lectures on Poisson Geometry},\\
https://faculty.math.illinois.edu/~ruiloja/Math595/Spring14/book.pdf.

\bibitem[G]{guillemin} V.\ Guillemin, \emph{Kaehler structures on toric
varieties}, J. Differential Geom. 40 (1994), no. 2, 285--309.

\bibitem[GMP]{guimipi12} V.\ Guillemin, E.\ Miranda, A.R.\ Pires, \emph{Symplectic and Poisson geometry on $b$-manifolds}, Adv. Math. \textbf{264}, pp.\ 864-896 (2014).

\bibitem[GMPS1]{btoric} V.\ Guillemin, E.\ Miranda, A.R.\ Pires, G.\ Scott, \emph{Toric actions on $b$-symplectic manifolds}, Int.\ Math.\ Res.\ Notices  (14): 5818--5848  (2015).

\bibitem[GMPS2]{torus} V.\ Guillemin, E.\ Miranda, A.R.\ Pires, G.\
Scott, \emph{Convexity for Hamiltonian torus actions on
$b$-symplectic manifolds}, Math. Res. Lett. 2 (2017), 363-377.

\bibitem[GS1]{guilleminsternberg} V.\ Guillemin, S.\ Sternberg, \emph{Convexity properties of the moment mapping}, Invent. Math. 67, pp. 491-513 (1982).

\bibitem[GS2]{guilleminsternberg2} V.\ Guillemin, S.\ Sternberg, \emph{Convexity properties of the moment mapping II}, Invent. Math. 77, no. 3, 533-546 (1984).

\bibitem[GS3]{guilleminsternberg3} V.\ Guillemin, S.\ Sternberg,
\emph{Birational equivalence in the symplectic category}, Invent
Math (1989) 97: 485.

\bibitem[KKP]{kkp} Y.\ Karshon, L.\ Kessler, M.\ Pinsonnault,
\emph{A compact symplectic four-manifold admits only finitely many
inequivalent toric actions}, J. Symplectic Geom. 5 (2007), no. 2,
139--166.

\bibitem[L]{lerman} E.\ Lerman, \emph{Symplectic cuts
}, Math. Res. Lett. 2 (1995), 247-258.

\bibitem[M]{morgan} J.\ Morgan, \emph{The Seiberg-Witten Equations and Applications to the Topology of Smooth
Four-Manifolds}, Princeton University Press.

\bibitem[Mo]{moore} F.\ Moore, \emph{Lectures on Seiberg-Witten
Invariants}, Lecture Notes in Mathematics 1996, Springer-Verlag,
Berlin, 2001.

\bibitem[MS]{mcduff} D.\ McDuff, D.\ Salamon, \emph{Introduction to Symplectic Topology}, 2nd edition, Oxford University Press (1999).

\bibitem[S]{scott} G.\ Scott, \emph{The geometry of $b^k$-manifolds}, arXiv 1304.3821.

\bibitem[T]{taubes} C.\ Taubes, \emph{The Seiberg-Witten invariants and symplectic
forms}, Math. Res. Lett. 1 (1994), 809-822.

\bibitem[W]{weinstein} A.\ Weinstein, \emph{The local structure of Poisson
manifolds}, J. Differential Geom. 18 (1983), no. 3, 523--557.


\end{thebibliography}
\end{document}